\documentclass{article}
\usepackage{amsmath,amsfonts,amssymb,amsthm,bbm,latexsym,mathrsfs}
\usepackage{graphicx,color,epsfig,fancyhdr,dsfont}
\usepackage{enumerate}
\usepackage{hyperref}
\usepackage{indentfirst}
\usepackage{amsmath,amscd}
\usepackage{caption}
\usepackage{graphicx, subfig}
\usepackage[all]{xy}
\usepackage{stmaryrd}
\usepackage{extarrows}

\newtheorem{definition}{Definition}
\newtheorem{lemma}[definition]{Lemma}
\newtheorem{theorem}[definition]{Theorem}
\newtheorem{coro}[definition]{Corollary}
\newtheorem{prop}[definition]{Proposition}
\newtheorem{example}[definition]{Example}
\newtheorem{remark}[definition]{Remark}
\newtheorem{fact}[definition]{Fact}
\newtheorem{ques}[definition]{Question}
\newtheorem{conj}[definition]{Conjecture}

\newtheorem*{claim}{Claim}

\newenvironment{claimproof}[1][\proofname]
               {
                 \proof[#1]
                 
               }
               {
                 \endproof
               }

\newcommand{\N}{\mathbb{N}}

\newcommand{\Z}{\mathbb{Z}}

\newcommand{\Acl}{\mathrm{Acl}}

\newcommand{\R}{\mathbb{R}}

\newcommand{\rk}{\mathrm{rk}}

\newcommand{\J}{\mathcal{J}}
\newcommand{\I}{\mathcal{I}}

\newcommand{\HH}{\mathrm{Haar}}
\newcommand{\Sym}{\mathrm{Sym}}
\newcommand{\supp}{\mathrm{supp}}
\newcommand{\SO}{\mathrm{SO}}

\newcommand{\dd}{\mathrm{d}}

\newcommand{\RP}{\mathbb{RP}}

\newcommand{\ind}{\mathrel{\raise0.2ex\hbox{\ooalign{\hidewidth$\vert$\hidewidth\cr\raise-0.9ex\hbox{$\smile$}}}}}

\begin{document}
\title{Independence relations induced by ideals}
\author{Zhentao Zhang}
\date{}
\maketitle

\begin{abstract}
Inspired by the non-meager independence $\ind^{nm}$ introduced by Krupi\'{n}ski \cite{KP}, we study further possible independence relations induced by ideals and provide a general framework for this topic. We show that the independence relation $\ind^\HH$ induced by Haar null ideals is a good example for locally compact groups and provide an example showing $\ind^\HH\neq \ind^{nm}$.
Moreover, we introduce an order on the collection of all well-behaved independence relations and conjecture that $\ind^{nm}$ is the maximum one for Polish groups. We prove the conjecture  under the extra hypothesis of $\sigma$-compactness. For Lie groups, we discuss $\SO(3)$ and 
$\mathrm{SE}(2)$ as examples, and prove that the independence relation is unique for nilpotent Lie groups. 
\end{abstract}

\section{Introduction}

In \cite{KP}, Krupi\'{n}ski introduced  a notion of Polish structure, and provided  a setting for developing an analog of geometric stability from model theory. This is a generalization of the profinite structures studied by Newelski, and the core idea originates from series of works by Newelski (see, e.g., \cite{L}). Further related references can be found in \cite{KP}. 

Let $G$ be a topological group. Unless stated otherwise, $G$ is Hausdorff.  Let $(G,M)$ be an action $G\times M\rightarrow M: (g,a)\rightarrow ga$. We naturally extend the action on $M^n$ by $g(a_1,\dots ,a_n)=(ga_1,\dots,g a_n)$ for each $n\in\N$.
For a subset $A$ of $M$ or a sequence $a=(a_i)$ of $M$, we let $G_A$ (resp. $G_a$) be the subgroup of $G$ which fixes $A$ (resp. $a$) pointwise. 
We call $M$ a \textbf{$G$-structure} if $G_a$ is closed for each $a\in M$ (equivalently, for each $a\in M^n$). 

For $a\in X^n$ and $A\subset M$, the orbit of $a$ over $A$, defined by $o(a/A):=\{ga:g\in G_A\}$ is treated as the type of $a$ over $A$. So for $b\in o(a/A)$ and $B\supset A$, we call $o(b/B)$ an extension of $o(a/A)$ over $B$, although $o(b/B)\subset o(a/A)$. In analogy with the independence of type extensions in model theory, we study the independence of orbit extensions. In Newelski's setting, $M$ is a compact topological space with a continuous 
$G$-action, and orbit independence is characterized by the meager property of extensions. A generalization by Krupi\'{n}ski removes the need for a topology on $M$, pulling the entire discussion of independence back to the group $G$. 

In \cite{KP}, nm-independence and o-dependence are studied.  For $a$, $b$, $C$ be finite subsets or tuples of $M$, we say that \textbf{$a$ is mn-independent (or o-independent) with $b$ over $C$}, and write $a\ind_C^{nm}b$ (resp. $a\ind_C^o b$) if $G_{Ca} G_{Cb}$ is non-meager (resp. open) in $G_C$ (\cite{KP} Proposition 2.3). Here, $Ca$ denotes $C\cup a$, which does not cause confusion as no operation is studied on $M$ in this note. 
The crucial fact is that these independence notions obey the independence axioms when $G$ is Polish.

\begin{fact}[\cite{KP} Theorem 2.5]\label{Axioms}
Let $\ind=\ind^{nm}$ or $\ind^o$. Let $G$ be Polish. Then 
\begin{itemize}
\item (Invariance) $a\ind _C b$ iff $g(a) \ind_{g(C)} g(b)$ for every finite $a,b, C\subset M$ and $g\in G$.

\item (Symmetry) $a\ind_C b$ iff $b\ind_C a$ for every finite $a,b,C\subset M$.

\item (Transitivity) $a\ind_B C$ and $a\ind_A B$ iff $a\ind_A C$ for every finite $a\subset M$ and finite $A\subset B\subset C\subset M$.
\end{itemize}
\end{fact}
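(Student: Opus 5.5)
Invariance and symmetry are formal consequences of the definitions, and the content lies in transitivity. I would prove both parts by reducing to statements about products of closed subgroups of the Polish group $G_A$. For \textbf{invariance}, conjugation by $g$ is a homeomorphic group automorphism of $G$. It satisfies $gG_Xg^{-1}=G_{gX}$, so it carries $G_C$ onto $G_{gC}$ and $G_{Ca}G_{Cb}$ onto $G_{gCga}G_{gCgb}$. Being open, or non-meager, relative to $G_C$ is preserved by such a map. For \textbf{symmetry}, inversion is a homeomorphism of $G_C$ sending $G_{Ca}G_{Cb}$ to $G_{Cb}G_{Ca}$, which gives symmetry at once. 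Note that each $G_X$ is closed, because $M$ is a $G$-structure, so every group appearing is Polish.

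For \textbf{transitivity}, write $H=G_A\supseteq K=G_B\supseteq L=G_C$ and $P=G_{Aa}$. Then $G_{Ba}=P\cap K$ and $G_{Ca}=P\cap L$, and the three relations become:
\[
(1)\ (P\cap K)L \text{ large in } K,\qquad (2)\ PK \text{ large in } H,\qquad (3)\ PL \text{ large in } H,
\]
where ``large'' means open or non-meager. I would first record the algebraic identity $P K\cap\ldots$, more precisely the fact that $PL\subseteq PK$, together with the decomposition
\[
PK=\bigcup_{k} P k,\qquad PL\cap Pk = P\big((P\cap K)L\big)\text{-translates},
\]
which follows from $P\cdot (P\cap K)L = PL$ restricted to $K$. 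From this, $PL=P\big((P\cap K)L\big)$ and $PK=P K$. The direction (1)$\wedge$(2)$\Rightarrow$(3) is then proved as follows. In the open case, $(P\cap K)L$ open in $K$ gives $PL$ containing $P\cdot U$ for a relatively open $U\subseteq K$ that is a union of $(P\cap K)$-cosets covering $K$ up to translation. Combining this with $PK$ open, a double coset argument shows that $PL$ is a union of open pieces.

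The converse (3)$\Rightarrow$(1),(2) is immediate for (2) in the open case, since $PL\subseteq PK$ and a group product containing a nonempty open set is open. For (1), intersect with $K$: one has $PL\cap K=(P\cap K)L$, because if $pl\in K$ then $p\in K$. If $PL$ is open, this intersection is open in $K$.

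The non-meager case is the main obstacle, since relative meagerness does not pass to closed subgroups by intersection. Here I would use Pettis' theorem: a product like $PL$ is analytic, hence has the Baire property, so it is non-meager iff it is comeager in some open set. Non-meager is then equivalent to ``$P\backslash H/L$ has a non-meager double coset union''. I would use the Kuratowski--Ulam theorem applied to the continuous open map $H\to H/K$. That map is open and, by Effros, the coset spaces are Polish. This shows that $PL$ is non-meager in $H$ iff $PK$ is non-meager in $H$ and, for non-meagerly many cosets $hK$, the fibre $PL\cap hK$ is non-meager in $hK$. Translating the fibres back to $K$ identifies them with $(P\cap K)L$ up to translation, which yields both directions of transitivity. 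This fibrewise Kuratowski--Ulam step over $H/K$ is where I expect the real work. In particular one must check that the fibre condition is the same for all relevant cosets; homogeneity under the left $P$-action should supply this.
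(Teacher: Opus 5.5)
Your treatment of Invariance, Symmetry, the open-case implication $(3)\Rightarrow(1),(2)$, and the whole non-meager case is sound. For $\ind^{nm}$, three ingredients give exactly $(3)\iff(1)\wedge(2)$:
\begin{itemize}
\item Kuratowski--Ulam for the continuous open map $f\colon H\to H/K$ (valid for open maps between Polish spaces, with the usual proof via a countable base);
\item the fibre identity $PL\cap pK=p\,(P\cap K)L$ for $p\in P$;
\item the fact that $PK=f^{-1}(PK/K)$ is non-meager iff $PK/K$ is.
\end{itemize}
The paper only cites \cite{KP} for this Fact, but your argument is the category analogue of how it proves Transitivity for $\ind^{\HH}$. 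There it uses the identity $HK_3\cap K_2=(H\cap K_2)K_3$ and decomposes over $K_1/K_2$, with the Weil formula and Proposition~\ref{q-lemma} in place of Kuratowski--Ulam.

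The gap is in the open case, direction $(1)\wedge(2)\Rightarrow(3)$, which you dismiss with ``a double coset argument''. Put $U=(P\cap K)L$, which is open in $K$. Then $PL=PU$, and you must show $PU$ is open in $H$, i.e.\ that $P(W\cap K)$ is a neighbourhood of $1$ for every neighbourhood $W$ of $1$. You know that $PK$ is open and that $PU\cap pK=pU$ is relatively open in each fibre. That does not give the claim formally, since a set with open projection and relatively open fibres need not be open.

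What is actually needed is that the multiplication map $P\times K\to PK$ is open, and this is a Baire-category fact. The group $P\times K$ acts continuously and transitively on $PK$ by $(p,k)\cdot x=pxk^{-1}$. Since $PK$ is open in the Polish group $H$, it is Polish, so Effros' theorem makes the orbit map $(p,k)\mapsto pk^{-1}$ open. Hence $PU$, the image of the open set $P\times U^{-1}$, is open in $PK$ and therefore in $H$.

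This is precisely where the Polish hypothesis enters for $\ind^o$, and it is where Effros' theorem belongs in your plan. Polishness of $H/K$ is a separate standard fact, not Effros' theorem.
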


As model theory emphasizes theories over particular models, this note focuses on the group $G$ and its related independence relation, treated as a kind of geometry. 
In section \ref{section-1}, we outline the general framework and explain why we set 
$M$ aside for convenience. 

In section \ref{section-2}, we focus on the independence relation induced by measures. We  show that for locally compact $G$, the independence relation induced 
by the Haar measure, denoted by $\ind^\HH$, is an important example. We present an example of a compact Polish group $G$ for which $\ind^{mn}\neq \ind^\HH$. Moreover, we try to generalize the Haar measure null ideal to the Haar null ideal for non-locally compact Polish groups. Unfortunately, there is an example showing that the induced independence relation does not satisfy the required axioms. On the positive side, we show that, for commutative Polish groups, the independence relation induced by the Haar null ideal coincides with $\ind^{mn}$ and $\ind^o$.

In section \ref{section-3}, we  try to classify all independence relations on certain groups in our framework. Moreover, we will give an order on the collection of the independence relations for $G$, based on their induced ranks. We conjecture that $\ind^{nm}$ is the maximum independence relation when $G$ is Polish. We prove the conjecture under the additional assumption that $G$ is $\sigma$-compact. We also give a more detailed study for the case where $G$ is a Lie group. We We obtain a complete classification of all independence relations on the special orthogonal group $\SO(3)$ and the special Euclidean group of the plane $\mathrm{SE}(2)$. Moreover, we show the uniqueness of the independence relation when $G$ is a nilpotent Lie group.

\section{Ideal-equipped group}\label{section-1}

Let $G$ be a topological group and $M$ a $G$-structure. For both mn-independence and o-independence, given finite $a,b,C\subset M$, the independence of $a$ and $b$ over $C$ can be intuitively interpreted as $G_{Ca} G_{Cb}$ being not ``small'' in $G_C$. Now  We  clarify the notion of ``small'' in terms of ideals.
Because ``smallness'' is to be discussed over all possible $G_C$, we will actually equip each closed subgroup of $G$ with an ideal. 
An \textbf{ideal scheme} of $G$ is a family of $\I=\{\I_H\}_H$ where $H$ runs over all closed subgroups of $G$ and each $\I_H$ is an ideal of $H$. We call a topological group $G$ equipped with an ideal scheme $\I$ an \textbf{ideal-equipped group}.

Then we define the $\I$-independence as follows:  for finite $a,b, C\subset M$, we write $a\ind^\I_C b$  iff $G_{Ca} G_{Cb}\notin \I_{G_C}$.
It is clear that $\ind^{nm}$ is the case in which $\I_H$ is the meager ideal on $H$ for every closed subgroup $H$. Although $\ind^o$ does not arise directly in this way, in the environment where $\ind^o$ is studied, in most environments  in which o-independence is studied (when $G$ is compact and 
$M$ is a compact space with a continuous 
$G$-action), $\ind^o=\ind^{nm}$ (\cite{KP} Corollary 2.13).

But in general $\ind^\I$ does not satisfy the properties (or axioms) listed in Fact \ref{Axioms}. 
Let us deal with them.

$\bullet$ (Symmetry)
It is obvious from the definition that Symmetry always holds. 

$\bullet$ (Invariance) For Invariance, we assume that every $\I_H$ is $H$-invariant. When $(G, \ind^\I)$ is viewed as a kind of geometry, this requirement is entirely natural. Furthermore, the independence relation should be invariant under inner automorphisms, which leads us to the additional assumption that $\I_{g^{-1}Hg} = g^{-1} \I_H g:=\{g^{-1}Xg:X\in\I_H\}$ for every closed subgroup $H$ and $g\in G$.

$\bullet$ (Transitivity) Transitivity is the central issue. Let us reduce it to a condition on $G$. If we name $G_{Aa}$ by $H$, $G_A$ by $K_1$,
$G_B$ by $K_2$,$G_C$ by $K_3$, we have $K_1>K_2>K_3$ and $H<K_1$. The Transitivity axiom translates into the statement that $(H\cap K_2)K_3\notin \I_{K_2}$ and $H K_2\notin \I_{K_1}$ iff $HK_3\notin \I_{K_1}$.

Now we consider some additional conditions.

$\bullet$ (Existence of extensions) Here we consider the existence of independent extensions of orbits. Recall that the $G$-structure $M$ is \textbf{small} if the action on each $M^n$ has only countably many orbits. 
In this case, for finite subsets $A\subset B$ and $a\in M^n$, the orbit  $o(a/A)$ always admits an nm-independent extension over $B$, i.e., there exists $b\in o(a/A)$ such that $b\ind^{nm}_A B$ (\cite{KP} Theorem 2.10). The essential reason is that the meager ideal is a $\sigma$-ideal. We therefore upgrade the ideal scheme to a 
$\sigma$-ideal scheme, which means that every $\I_H$ is a $\sigma$-ideal. We carry over the proof of nm-independence to the setting of $\I$-independence; the argument is entirely similar.

\begin{prop}
Let $\I$ be a $\sigma$-ideal scheme with Invariance. Assume that $M$ is a small $G$-structure. Then for finite subsets $A\subset B$ and $a\in M^n$, the orbit  $o(a/A)$ admits an $\I$-independent extension over $B$. 
\end{prop}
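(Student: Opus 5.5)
We argue by contradiction, using the $\sigma$-additivity of $\I_{G_A}$ together with the countability of orbits supplied by smallness. Set $H=G_A$ and $K=G_B$, so $K\le H$. The candidates for the extension are the tuples $b=ha$ with $h\in H$, and we want some $h$ with $b\ind^\I_A B$, i.e. $G_{Ab}G_{B}\notin \I_{H}$ (since $A\subset B$, $G_{AB}=G_B$). For $b=ha$ we have $G_{Ab}=hG_{Aa}h^{-1}$, because $h$ fixes $A$ pointwise. So the condition reads
\[
hG_{Aa}h^{-1}K\notin \I_H .
\]
Suppose, towards a contradiction, that $hG_{Aa}h^{-1}K\in\I_H$ for every $h\in H$.

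Next we cover $H$ by countably many translates of such sets. Fix an enumeration $B=(b_1,\dots,b_m)$ and consider the tuples $(ha,B)$ with $h\in H$. They lie in $M^{n+m}$, which has only countably many $G$-orbits. Hence the set $\{(ha,B): h\in H\}$ splits into countably many classes under the relation ``same $G$-orbit''. Two elements $(ha,B)$ and $(h'a,B)$ are in the same $G$-orbit iff there is $g\in G$ with $gB=B$ pointwise and $gha=h'a$. This means $g\in K$ and $h'^{-1}gh\in G_{Aa}$, i.e.
\[
h'\in KhG_{Aa}.
\]
So there are countably many $h_i\in H$ with $H=\bigcup_i Kh_iG_{Aa}$. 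The key point is that smallness of $M^{n+m}$ gives countably many $K$-orbits on $o(a/A)$.

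Now we convert this cover into the sets appearing in the independence condition, using Invariance. Write $Kh_iG_{Aa}=(Kh_iG_{Aa}h_i^{-1})h_i$. This is a right translate by $h_i\in H$ of $Kh_iG_{Aa}h_i^{-1}$, which is in turn the inverse of $h_iG_{Aa}h_i^{-1}K$. We need $\I_H$ to be closed under inversion and right translation by elements of $H$. The paper's Invariance says that $\I_H$ is $H$-invariant; I would read this as invariance under left and right translation, and also check closure under inversion. If only left invariance is available, I would instead set up the cover with the sets $h_iG_{Aa}h_i^{-1}K$ directly. To do this, I would use orbits of pairs written in the opposite order, or note that $H=H^{-1}=\bigcup_i G_{Aa}h_i^{-1}K=\bigcup_i h_i^{-1}(h_iG_{Aa}h_i^{-1})K$, which is a left translate by $h_i^{-1}\in H$ of $h_iG_{Aa}h_i^{-1}K$. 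This second form needs only left invariance, so I would present it that way. Then each piece $h_i^{-1}(h_iG_{Aa}h_i^{-1}K)$ lies in $\I_H$ by the hypothesis together with left invariance. Since $\I_H$ is a $\sigma$-ideal, $H\in\I_H$.

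The main obstacle is that we still need a contradiction from $H\in\I_H$, which requires that $\I_H$ be a proper ideal. This is implicit in the word ``ideal''; I would state it explicitly, noting that for the meager ideal it is the Baire category theorem. Granting properness, some $h\in H$ satisfies $hG_{Aa}h^{-1}K\notin\I_H$, and then $b=ha\in o(a/A)$ is the required independent extension. The remaining bookkeeping is the identification $G_{Ab}=hG_{Aa}h^{-1}$ and the orbit computation above.
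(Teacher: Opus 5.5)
Your proof is correct and is essentially the paper's argument. In both, smallness gives countably many $G_B$-orbits on $o(a/A)$, hence $G_A=\bigcup_i G_Bg_iG_{Aa}$, and then the $\sigma$-ideal property together with Invariance yields some $g_ia$ that is $\I$-independent from $B$ over $A$. The differences are minor: you invert the cover so that only left invariance is needed, whereas the paper writes the product in the order $G_BG_{Aa_i}$ and right-translates by $g_i^{-1}$, and you state explicitly the properness $G_A\notin\I_{G_A}$ that the paper leaves implicit.
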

\begin{proof}
By smallness, There is a countable set $\{a_i\}_i$ of representatives of all orbits over $B$ contained in $o(a/A)$. Take $g_i\in G_A$ such that $a_i=g_i a$. Then it is easy to check that $G_{Aa_i}=g_i G_{Aa} g_i^{-1}$, so $a_i\ind_A B$ iff $G_B g_iG_{Aa} g_i^{-1}\notin \I_{G_A}$ iff (by Invariance) $G_B g_iG_{Aa}\notin \I_{G_A}$. Also, it is easy to check that $G_A=\bigcup_i G_B g_i G_{Aa}$. Since $\I_{G_A}$ is a $\sigma$-ideal, there is $i$ such that $G_B g_iG_{Aa}\notin \I_{G_A}$, which implies $a_i\ind^\I_A B$.
\end{proof}

$\bullet$ (Dichotomy for closed subgroups)
Recall that for a finite set $A\subset M$, $\Acl(A)$ is defined as the set of all elements with countable orbits over $A$. Apart from the three axioms we have mentioned in Fact \ref{Axioms}, \cite{KP} Theorem 2.5 also addresses the relationship between $\Acl$
and $\ind$: $a\in \Acl(A)$ iff $a\ind_A B$ for every finite set $B\supset A$. In the language of ideal-equipped groups, this property writes: for closed subgroups $K<H$, $K \in \I_H$ iff 
$H/K$ is uncountable.

We now summarize the properties discussed above.

\begin{definition}\label{admissible}
An  ideal scheme $\I$  of $G$ is \textbf{admissible} if it satisfies
\begin{itemize}
\item ($\sigma$-ideal) Every $\I_H$ is a $\sigma$-ideal.

\item (Invariance) 
Every $\I_H$ is $H$-invariant; $g^{-1}\I_H g=\I_{g^{-1}Hg}$ for every $g\in G$.
\item (Transitivity) For closed subgroups $K_1>K_2>K_3$ and $H<K_1$,$(H\cap K_2)K_3\notin \I_{K_2}$ and $H K_2\notin \I_{K_1}$ iff $HK_3\notin \I_{K_1}$.
\end{itemize}

The admissible scheme $\I$ is  \textbf{strongly admissible} if it further satisfies the following property:
\begin{itemize}
\item (Dichotomy for closed subgroups) For closed subgroups $K<H$, $K \in \I_H$ iff 
$H/K$ is uncountable.
\end{itemize}
\end{definition}

It is clear that for every closed subgroup $L$ of $G$, the restriction $\I|_L:=\{\I_{H\cap L}\}_H$ is an ideal scheme of $L$. Moreover, if $\I$ is (strongly) admissible, then so is $\I|_L$.

The reason we formulate these properties on the group $G$ is that we want to take into account arbitrary $G$-structures instead of a particular $M$. This is analogous to the model-theoretic practice of working with sufficiently saturated models. From the above discussion, we know that when $\I$ is admissible, the induced independence relation $\ind^\I$ satisfies the axioms listed in  Fact \ref{Axioms}. Note that, for a $\sigma$-ideal scheme $\I$, a failure of admissibility can be witnessed by finitely many closed subgroups $H_i$, then the failure of axioms for $\ind^\I$ will be witnessed by the $G$-structure $M:=\bigsqcup_i G/H_i$, which is small and faithfully acted by $G$.

If the ideal scheme $\I$ is {admissible geometry} (resp. {strongly admissible geometry}), we call $(G,\ind^\I)$ an \textbf{admissible geometry} (resp. \textbf{strongly admissible geometry}) for $G$. We care about the geometry, not the ideal scheme $\I$, although the latter determines the former.
Obviously, $\I^\emptyset:=\{\I_H^\emptyset=\{\emptyset\}\}_H$ gives an admissible geometry. However, strongly admissible geometry may not exist. 

\begin{example}
Let $G=\sum_{m<\omega} (\R,+)$ be equipped with the discrete topology. Let $H_n=\sum_{m\leq n} \R$. If $\I$ is strongly admissible, then $H_n\in \I_G$. However, $G=\bigcup_{n\in \omega} H_n$ which is impossible.
\end{example}

We call a topological group $G$ \textbf{tame} if it admits at least one strongly admissible geometry. 
Of course, when $G$ is Polish, $(G,\ind^{nm})$ is a strongly admissible geometry. So Polish groups are tame.

Here is a natural but impractical  question:

\begin{ques}
Given a topological group $G$, classify all (strongly) admissible geometries for $G$.
\end{ques}

We aim to approach this problem by restricting $G$ to a suitable class of groups, or imposing further natural conditions on $\I$, in order to obtain a definitive answer. This parallels the model-theoretic characterization of forking independence in simple theories (\cite{Simple} Theorem 6.2.1). We will give some related results, but we are still far from this goal.

\section{Ideal schemes from measures}\label{section-2}

We now study ideal schemes arising from measures, which provide good examples. 

Let us first assume $G$ to be locally compact and consider its (left) Haar measure $\mu_G$. In addition, we may assume that $G$ is $\sigma$-compact, i.e., $G$ can be covered by countably many compact subsets. Note that every locally compact group has an open $\sigma$-compact subgroup, so we can establish the theory for this subgroup and then extend it to the whole group by translation.  We will review some results for Haar measures, for which  \cite{Book} Chapter 2 is a good reference. 

Let $H$ be a closed subgroup of $G$. It is clear that $H$ is locally compact and admits a (left) Haar measure $\mu_H$. A Borel subset $X$ of $H$ is \textbf{null} if $\mu_H(X)=0$. A subset of $H$ is \textbf{null} if it is contained in a Borel null set. Note that the notion of a null set does not depend on the choice of Haar measure, nor on whether it is left or right Haar measure.
It is clear that the null subsets of $H$ form a $\sigma$-ideal $\I^\HH_H$ and $\I^\HH:=\{\I_H^\HH\}$ is an ideal scheme on $G$.

We can also discuss the quotient space $G/H$.
There is a rho-function $\rho:G\rightarrow \R^{>0}$  such that $\rho(gh)=\frac{\Delta_H(h)}{\Delta_G(h)}\rho(g)$ for $g\in G,h\in H$ where $\Delta_G$ and $\Delta_H$ are modular functions of $G$ and $H$ respectively. The rho-function gives a strongly quasi-invariant measure $\mu_{G/H}$ on $G/H$ with \textit{Weyl integration formula}: 
$$\int_{G/H} (\int_H f(xy) \dd \mu_H( y)) \dd \mu_{G/H}(xH)=\int_G f(x) \dd \mu_G(x)$$
for each compactly supported continuous function $f:G\rightarrow \R$. Again, a Borel subset $X$ of $G/H$ is \textbf{null} if $\mu_{G/H}(X)=0$, and an arbitrary subset of $G/H$ is \textbf{null} if it is contained in a Borel null set. Since all of such $\mu_{G/H}$ are strongly equivalent, our choice of $\mu_{G/H}$ makes no difference when studying null sets.

Recall that we assume that $G$ is $\sigma$-compact. Then for closed subgroups $H$ and $K$ of $G$,  $H=\bigcup_n X_n$ and $K=\bigcup_m Y_m$ where $X_n$ and $Y_n$ are compact. Then $HK=\bigcup_{n,m} X_n Y_m$ which is Borel. Moreover, in $G/K$, $H/K=\bigcup_n X_n/K$ which is also Borel. Both $HK$ and $H/K$ are Haar measurable. We have a parallel statement to \cite{KP} Proposition 2.3.

\begin{prop}\label{q-lemma}
Let $H$, $K$ be closed subgroups of $G$. Then $HK$ is null in $G$ iff $H/K$ is null in $G/K$.
\end{prop}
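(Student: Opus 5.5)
The plan is to apply the Weyl integration formula to indicator functions. The key observation is that $HK$ is a union of left cosets of $K$, namely $HK=\pi^{-1}(H/K)$, where $\pi:G\to G/K$ is the quotient map. This gives the following heuristic. For any $x\in G$, the inner integral $\int_K 1_{HK}(xy)\,\dd\mu_K(y)$ equals $\mu_K(K)$ if $xK\subset HK$ and $0$ otherwise; it depends only on $xK$. So the formula should give
$$\mu_G(HK)=\mu_K(K)\cdot\mu_{G/K}(H/K).$$
This is not directly usable, since $\mu_K(K)$ may be infinite and the formula is only stated for compactly supported continuous $f$. I therefore need a version with a cutoff.

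First I extend the Weyl formula to nonnegative Borel functions on $G$. The standard argument is a monotone class argument: approximate indicators of compact sets from above by compactly supported continuous functions and use monotone convergence, relying on the $\sigma$-compactness and the Borel measurability of $HK$ and $H/K$ noted just before the statement. Since this step is the standard extension of the formula (\cite{Book} Chapter 2), I would cite it rather than reprove it.

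Next I fix a compact set $C\subset K$ with $\mu_K(C)>0$, for example a compact neighbourhood of the identity in $K$, and a compact set $D\subset G$. Define $f(x)=1_{HK}(x)\,1_{DC}(x)$. Then $f$ is Borel with compact support, so it fits the extended formula.

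For the direction ``$H/K$ null $\Rightarrow$ $HK$ null'': the inner integral $\int_K f(xy)\,\dd\mu_K(y)$ vanishes unless $xK\in H/K$. Hence $\int_G f\,\dd\mu_G=0$. Writing $G=\bigcup_n D_n$ with $D_n$ compact, I get that $HK\cap D_nC$ is null for every $n$, and since the sets $D_nC$ cover $G$, $HK$ is null.

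For the converse, assume $\mu_G(HK)=0$. Then for every nonnegative Borel $f$, the integral $\int_{G/K}\big(\int_K 1_{HK}(xy)f(xy)\,\dd\mu_K(y)\big)\dd\mu_{G/K}=0$. Choose $f=1_{D}*$-type functions, concretely $f(z)=1_{DC}(z)$ with $D$ compact and $C$ a compact identity neighbourhood in $K$. For $x\in D$ with $xK\in H/K$, the inner integral is at least $\mu_K(C)>0$, because $xy\in HK\cap DC$ for all $y\in C$. So $\mu_{G/K}(\pi(D)\cap H/K)=0$. Covering $G$ by countably many compact sets $D_n$, and using that the sets $\pi(D_n)$ cover $G/K$, gives that $H/K$ is null.

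The main obstacle is justifying the Weyl formula for these Borel (noncontinuous) integrands. Specifically, I need to know that the inner integral is a measurable function on $G/K$ and that the identity persists under monotone limits. I expect to handle this through the standard density argument for positive Radon measures together with $\sigma$-compactness. I also need that $\mu_{G/K}$-null sets are independent of the choice of rho-function, which the paper has already remarked.
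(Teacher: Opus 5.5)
Your proposal is correct and follows the paper's route. The paper's proof is the one-line instruction to approximate the relevant indicator functions by compactly supported continuous functions and apply the Weyl formula, and you fill in the details it omits: extending the formula to nonnegative Borel integrands (needed here only for indicators of $\sigma$-compact sets such as $HK\cap DC$), checking that the inner integral depends only on the coset $xK$, and using the cut-off $1_{DC}$ with $\mu_K(C)>0$ so that the converse direction yields $\mu_{G/K}(\pi(D)\cap H/K)=0$.
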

\begin{proof}
Approximate the relevant characteristic functions by continuous compactly supported functions, and apply the Weyl formula.
\end{proof}

Now we study the independence relation $\ind^\HH$ induced by $\I^\HH$. 

\begin{theorem}
Let $G$ be locally compact and $\sigma$-compact. Then
$(G, \ind^\HH)$ is a strongly admissible geometry.
\end{theorem}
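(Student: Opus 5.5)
We verify the four conditions of Definition \ref{admissible} for $\I^\HH$ one at a time. Throughout we use that every closed subgroup of $G$ is again locally compact and $\sigma$-compact, so Proposition \ref{q-lemma} applies inside any closed subgroup in place of $G$.

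The $\sigma$-ideal property is immediate, since countable unions of Borel null sets are null. For Invariance, left translates of null sets are null because $\mu_H$ is left invariant. Right translates are also null, because $\mu_H(Xh)=\Delta_H(h)\mu_H(X)$; alternatively, null sets for the left and right Haar measures coincide. Conjugation $x\mapsto g^{-1}xg$ is a topological group isomorphism $H\to g^{-1}Hg$. It therefore carries a Haar measure to a Haar measure, by uniqueness up to scalars, and so it carries null sets to null sets. This gives $g^{-1}\I_H g=\I_{g^{-1}Hg}$.

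For Dichotomy, let $K<H$ be closed. If $H/K$ is countable, then $H$ is a countable union of cosets $hK$. Since $\mu_H(H)>0$, some coset, and hence $K$ itself, has positive measure, so $K\notin\I_H$. If $H/K$ is uncountable, then $K$ has uncountable index. A closed subgroup of positive Haar measure contains an open neighbourhood of the identity (Steinhaus: $KK^{-1}$ contains a neighbourhood of $e$). Such a subgroup is open, and an open subgroup of a $\sigma$-compact group has countable index, since the compact sets $X_n$ each meet finitely many cosets. Hence $\mu_H(K)=0$.

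Transitivity is the main step. Fix $K_1>K_2>K_3$ and $H<K_1$, and write $H'=H\cap K_2$. By Proposition \ref{q-lemma} applied in $K_1$, the condition $HK_3\notin\I_{K_1}$ is equivalent to $\mu_{K_1/K_3}(HK_3/K_3)>0$, and similarly $HK_2\notin\I_{K_1}$ is equivalent to $\mu_{K_1/K_2}(HK_2/K_2)>0$. Applying Proposition \ref{q-lemma} in $K_2$, the condition $H'K_3\notin\I_{K_2}$ is equivalent to $\mu_{K_2/K_3}(H'K_3/K_3)>0$. We use the fibration $K_1/K_3\to K_1/K_2$ with fibre $K_2/K_3$ and the disintegration (iterated Weyl formula)
$$\int_{K_1/K_3}f=\int_{K_1/K_2}\int_{K_2/K_3} f(xy\,\cdot)\,\rho\ \dd\mu_{K_2/K_3}\,\dd\mu_{K_1/K_2}(xK_2)$$
with a positive continuous density $\rho$. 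This formula is obtained from three applications of the Weyl formula, $\int_{K_1}=\int_{K_1/K_3}\int_{K_3}$, $\int_{K_1}=\int_{K_1/K_2}\int_{K_2}$ and $\int_{K_2}=\int_{K_2/K_3}\int_{K_3}$, together with the usual approximation of characteristic functions by compactly supported continuous functions. Since only null-ness matters, the density factors are harmless.

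Apply this to $S=HK_3/K_3$. The fibre over $xK_2$ is nonempty only when $x\in HK_2$, and then we may take $x=h\in H$. In that case the fibre is $\{yK_3:y\in K_2,\ hy\in HK_3\}=\{yK_3 : y\in H'K_3\}$, which is the left translate $H'K_3/K_3$ viewed inside $hK_2/K_3$. By quasi-invariance, every nonempty fibre therefore has positive measure exactly when $H'K_3/K_3$ does. Fubini then gives that $S$ is non-null if and only if the set of $xK_2$ with nonempty fibre, namely $HK_2/K_2$, is non-null and the common fibre $H'K_3/K_3$ is non-null. This is exactly Transitivity.

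The main obstacle is making this Fubini/disintegration rigorous for quotient measures that are only quasi-invariant. This requires showing that the fibre measure is a translate of $\mu_{K_2/K_3}$ up to an equivalent density, and checking that $S$ is Borel, which follows from $\sigma$-compactness as in the remarks before Proposition \ref{q-lemma}, so that Fubini applies. If needed, I would instead pull everything back to $K_1$ and work with genuine Haar measures: I would show that $HK_3$ is $\mu_{K_1}$-null iff $HK_2$ is $\mu_{K_1}$-null or $H'K_3$ is $\mu_{K_2}$-null, using $\int_{K_1}=\int_{K_1/K_2}\int_{K_2}$ and the identity $HK_3\cap hK_2=hH'K_3$ for $h\in H$.
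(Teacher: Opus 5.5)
Your proof is correct. For Transitivity it is essentially the paper's argument. The paper fibres $K_1$ over $K_1/K_2$ with the $K_2$-cosets as fibres, uses $HK_3\cap K_2=(H\cap K_2)K_3$ and $HK_3K_2=HK_2$, applies the Weil formula, and converts with Proposition \ref{q-lemma}. That is exactly your fallback. Your primary route through the tower $K_1/K_3\to K_1/K_2$ proves the same thing, but it needs the disintegration of quasi-invariant quotient measures that you yourself flag. The fallback avoids this by working with honest Haar measure on $K_1$ and passing to $K_1/K_2$ only through Proposition \ref{q-lemma}, so it is the version to write down. You also check the $\sigma$-ideal and Invariance conditions explicitly, which the paper treats as obvious.

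Where you genuinely differ is the uncountable-index half of Dichotomy. You invoke Steinhaus's theorem to conclude that a non-null closed subgroup is open, and hence of countable index. State explicitly that you apply it to a compact piece $K\cap X_n$ of finite positive measure, since $K$ itself may have infinite measure.

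The paper instead argues by counting. If $\mu_H(K)>0$ and there are uncountably many cosets, then by left invariance each coset $h_\alpha K$ meets some compact $X_j$ in positive measure. Pigeonholing gives one $X_m$ and an $\epsilon>0$ such that uncountably many pairwise disjoint cosets each meet $X_m$ in measure greater than $\epsilon$, contradicting $\mu_H(X_m)<\infty$.

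The paper's route is self-contained, using only invariance and finiteness of Haar measure on compact sets. Yours relies on a heavier theorem but yields a sharper conclusion: for closed subgroups $K<H$, being non-null, being open, and having countable index all coincide.
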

\begin{proof}
It suffices to show the last two properties in Definition \ref{admissible}.

(Transitivity) Let $K_1>K_2>K_3$ and $H<K_1$ be closed subgroups of $G$. Note that $HK_3\cap K_2=(H\cap K_2)K_3$ and in $K_1/K_2$, $H/K_2=HK_2/K_2=HK_3/K_2$. The result follows immediately by decomposing 
$G$ into $K_2$ and $G/K_2$, and applying the Weil formula together with Proposition \ref{q-lemma}.

(Dichotomy for closed subgroups)  Let $K<H$ be closed subgroups. It is clear that if $H/K$ is countable, $\mu_H(K)>0$. Now we prove the other direction. Assume, for a contradiction, that $H/K=\{h_\alpha K:\alpha<\lambda\}$ is uncountable and $\mu_H(K)>0$.
Recall that $G$ is $\sigma$-compact and so is every closed subgroup of $G$. Let $(X_n)_{n\in\omega}$ be a sequence of compact subsets of $H$ covering $H$. It is clear that for every $\alpha<\lambda$, there is $j(\alpha)\in\omega$ such that $\mu_H(h_\alpha K\cap X_{j(\alpha)})>0$. Then there is $m\in\omega$ such that $|j^{-1}(m)|=\lambda$. Then there are $\epsilon>0$ and a uncountable set $\Lambda\subset\lambda$ such that for every $\alpha\in \Lambda$, $\mu_H(h_\alpha K\cap X_m)>\epsilon$. Hence, $\mu_H(X_m)>n\epsilon$ for every $n\in\omega$, which is impossible.
\end{proof}

\begin{remark}
Recall that if $G$ is locally compact, it has an open $\sigma$-compact subgroup $U$. We deal with Haar measures on $G$ by ones on $U$ by translations. 

Let $H$ and $K$ be closed subgroups of $G$. Let $g,h\in G$. It is easy to see that if $H\cap gU\neq\emptyset$, then $H\cap gU=x(H\cap U)$ for every $x\in H\cap gU$. Similarly, $K\cap Uh=(K\cap U)y$ for every $y\in K\cap Uh$ when $K\cap Uh$ is nonempty. Then $HK=\bigcup_{g,h} (H\cap gU)(K\cap Uh)=\bigcup_{x,y}x(H\cap U)(K\cap U)y$ where $x\in H\cap gU\neq\emptyset$ and $y\in K\cap Uh\neq \emptyset$. Then it is easy to see that $HK\in \I^\HH_G$ iff $(H\cap U)(K\cap U)\in \I_U^\HH$. So we can study $\I^\HH=\{\I^\HH_H\}_H$ by the ideal scheme $\I^\HH|_U$ on $U$.

Hence, when $G$ is locally compact, $(G, \ind^\HH)$ is an admissible geometry.
\end{remark}

The following example shows that $\ind^\HH\neq \ind^{nm}$ even for compact Polish groups.

\begin{example}
Let $l_n=2^{n+2}$. Let $G_n=\Sym(\{1,\dots,l_n\})$ be the symmetric group on $\{1,\dots,l_n\}$ with the discrete topology. Let $G=\prod_n G_n$. Obviously, $G$ is a compact Polish group.
Let $\mu_G$ be the left Haar measure with $\mu_G(G)=1$.

Let $H_n=\{\sigma\in G_n:\sigma(1)=1\}<G_n$ and $K_n=\{\sigma\in G_n:\sigma(2)=2\}<G_n$. Let $H=\prod_nH_n$ and $K=\prod_n K_n$. It is easy to see that $HK=\prod_n H_nK_n$.

For each $n$, $|H_n|=|K_n|=(l_n-1)!$ and $|H_n\cap K_n|=(l_n-2)!$. Then $|H_nK_n|=\frac{|H_n||K_n|}{|H_n\cap K_n|}=(l_n-1)!(l_n-1)$, so $\frac{|H_n||K_n|}{|G_n|}=\frac{l_n-1}{l_n}=1-\frac{1}{l_n}$.

Note that $\log(1-x)>-2x$ when $0<x<1/2$.
Then $\mu_G(HK)=\prod_n(1-\frac{1}{l_n})=\exp(\sum_n\log(1-2^{-n-2}))>\exp(-\sum_n 2^{-n-1})=e^{-1}>0$. So $HK$ is not null in $G$.

It is clear that $HK$ is compact and hence closed. Moreover, $HK$ has empty interior. So $HK$ is nowhere dense, and hence meager.

If one needs a $G$-structure witnessing the difference, we let $M:=G\sqcup (G/H)\sqcup (G/K)\sqcup(G/(H\cap K))$.
\end{example}

We now drop the assumption that 
$G$ is locally compact and assume instead that 
$G$ is a Polish group. We then attempt to generalize the previous discussion on null sets to Haar null sets. Recall that a Borel subset $X$ of $G$ is \textbf{Haar null} if there exists a Borel probability measure  $\mu$ on $G$ such that  $\mu(gXh)=0$ for all $g,h\in G$; an arbitrary subset of $G$ is \textbf{Haar null} if it is contained in a Haar null Borel subset (\cite{Haar-null} Definition 1.1, which is originally from \cite{C-Haar}). It is known that the collection of Haar null sets forms a $\sigma$-ideal and coincides with $\I^\HH_G$ when $G$ is locally compact. So we also use $\I^\HH_G$ to denote this ideal and use $\I^\HH$ to denote the ideal scheme $\{\I^\HH_H\}_H$ on $G$. 

However, $(G, \ind^\HH)$ is not an admissible geometry in general. Before presenting an example, we recall a fact.

\begin{fact}[\cite{Haar-null} Fact 2.2]\label{compact-cat}
Every compact catcher subset of $G$ is not Haar null.

Here, a subset $X$ of $G$ is called \textbf{compact catcher} if for every compact subset $F\subset G$, there are $g,h\in G$ such that $gFh\subset X$. 
\end{fact}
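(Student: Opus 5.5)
The statement to prove is Fact \ref{compact-cat}: every compact catcher subset $X$ of a Polish group $G$ is not Haar null.

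I will argue by contradiction. Suppose $X$ is a compact catcher and Haar null. By definition there is a Borel Haar null set $B\supset X$, together with a Borel probability measure $\mu$ on $G$ such that $\mu(gBh)=0$ for all $g,h\in G$.

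The first step uses that $G$ is Polish. Every Borel probability measure on a Polish space is tight, so there is a compact set $F\subset G$ with $\mu(F)>0$.

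The second step applies the catcher property to this $F$. It gives $g,h\in G$ with $gFh\subset X\subset B$, and hence $F\subset g^{-1}Bh^{-1}$. By the choice of $\mu$ we have $\mu(g^{-1}Bh^{-1})=0$. Monotonicity then gives $\mu(F)=0$, contradicting $\mu(F)>0$.

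The only non-formal ingredient is tightness (Ulam's theorem) for Borel probability measures on Polish spaces, and it is standard. One point needs care: Haar nullness of an arbitrary set is witnessed by a Borel superset. So the translates must be taken of $B$, not of $X$, which is what the argument above does. No measurability of $X$ itself is needed.
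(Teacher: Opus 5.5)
Your argument is correct. Tightness of $\mu$ on the Polish group $G$ gives a compact $F$ with $\mu(F)>0$, and the catcher property then puts $F$ inside the $\mu$-null translate $g^{-1}Bh^{-1}$ of the Borel hull. The paper gives no proof here and simply cites \cite{Haar-null}; what you wrote is the standard short proof of that fact, including the necessary care in working with a Borel Haar null superset $B$ of $X$.
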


Now we give an example.

\begin{example}
Let $Y,Z$ be countably infinite sets and $X=Y\sqcup Z$.

For $\sigma\in \Sym(X)$, the symmetric group
on $X$, the support of $\sigma$, $\supp(\sigma):=\{x\in X:\sigma(x)\neq x\}$. We denote $\Sym_\omega(X):=\{\sigma\in \Sym(X):|\supp(\sigma)|<\omega\}$, the subgroup of finitely supported permutations. 

Note that $\Sym_\omega(X)$ is countable.
We equip $\Sym_\omega(X)$ with the discrete topology. Let $G=\Sym_\omega(X)^\omega$ with the product topology. It is clear that $G$ is Polish.

Let $P=\Sym_\omega(Y)^\omega$, $Q=\Sym_\omega(Z)^\omega$ and $R=P\cap Q=\{1_G\}$. 

\begin{claim}
$P$ is compact catcher in $G$.
\end{claim}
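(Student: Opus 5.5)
To prove the Claim, I will take an arbitrary compact set $F\subset G$ and find a single element $g\in G$ with $gFg^{-1}\subset P$. This suffices, since it gives the required $g,h$ with $h=g^{-1}$.

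The first step is to understand compact subsets of $G=\Sym_\omega(X)^\omega$. Each factor is discrete, so the projection $\pi_n(F)$ of $F$ to the $n$-th coordinate is a compact subset of a discrete space, hence finite. Consequently the set
\[
S_n:=\bigcup_{\sigma\in \pi_n(F)}\supp(\sigma)
\]
is a finite subset of $X$ for every $n$.

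The second step works coordinatewise. For each $n$ I will choose $\tau_n\in\Sym_\omega(X)$ with $\tau_n(S_n)\subset Y$. Such a $\tau_n$ exists because $Y$ is infinite: the finite set $S_n\cap Z$ can be swapped with an equally large finite set of points of $Y\setminus S_n$ by a finitely supported permutation that is the identity elsewhere. For every $\sigma\in\pi_n(F)$ we then have $\supp(\tau_n\sigma\tau_n^{-1})=\tau_n(\supp(\sigma))\subset Y$. Hence $\tau_n\sigma\tau_n^{-1}$ fixes $Z$ pointwise and, as a finitely supported permutation of $Y$, lies in $\Sym_\omega(Y)$.

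Finally, I set $g=(\tau_n)_n\in G$. For every $f\in F$, each coordinate of $gfg^{-1}$ lies in $\Sym_\omega(Y)$, so $gFg^{-1}\subset P$. This proves that $P$ is a compact catcher.

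The only point needing care is the finiteness of each $\pi_n(F)$; it follows from the discreteness of the factors and the continuity of the projections. I see no real obstacle beyond that. The choice of $\tau_n$ uses that $Y$ is infinite. Symmetrically, $Q$ is also a compact catcher, which by Fact \ref{compact-cat} is presumably how the example proceeds.
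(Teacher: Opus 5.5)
Your proposal is correct and follows the paper's argument essentially verbatim: finite coordinate projections, the finite union of supports in each coordinate, coordinatewise conjugation by $\tau_n$ moving that finite set into $Y$, and then $\tau F\tau^{-1}\subset P$. You also spell out why such $\tau_n$ exists and why $\supp(\tau_n\sigma\tau_n^{-1})=\tau_n(\supp(\sigma))$, which the paper leaves to the reader.
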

\begin{claimproof}
Let $F\subset G$ be compact. Let $F_n<\Sym_\omega(X)$ be the $n$-th projection of $F$. It is clear that $F_n$ is compact and therefore finite. Let $C_n:=\bigcup_{\sigma\in F_n}\supp(\sigma)$. Obviously, $C_n$ is finite. 
Since $Y$ is infinite, there is $\tau_n\in \Sym_\omega(X)$ such that $\tau_n(C_n)\subset Y$. Let $\tau=(\tau_n)\in G$. It is easy to see that $\tau F \tau^{-1}\subset P$.
\end{claimproof}

By Fact \ref{compact-cat}, $P$ is not Haar null in $G$. For the same reason, $Q$ is not Haar null in $G$.  
It is easy to check that $R$ is Haar null in $P$.

We let $K_1=G>K_2=P>K_3=R$ and $H=Q$. Then $(H\cap K_2)K_3\in \I_{K_2}$ but $HK_3\notin \I_{K_1}$ which breaks Transitivity in Definition \ref{admissible}.
\end{example}

Inspired by the model-theoretic characterization of simple theories by forking independence (\cite{Simple} Theorem 6.2.1), we ask:

\begin{ques}
Does there exist a natural (topological and abstract group-theoretic) characterization of those Polish groups $G$ for which $(G, \ind^\HH)$ is an admissible geometry?
\end{ques}

We have shown that $(G,\ind^\HH)$ is an admissible geometry when $G$ is locally compact. But this condition is not necessary. Note that $G=(\Z,+)^\omega$ is an example where $(G,\ind^\HH)$ is an admissible geometry while $G$ is not locally compact. This follows from that the commutativity of $G$, which is an abstract group-theoretic condition, ensures admissibility.

\begin{theorem}
Let $G$ be a commutative Polish group. Then $\ind^\HH=\ind^o=\ind^{nm}$.
\end{theorem}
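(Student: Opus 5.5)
The key observation is that in a commutative group, for finite $a,b,C$ the set $G_{Ca}G_{Cb}$ is itself a subgroup of $G_C$: it is a product of two subgroups of an abelian group. So all three relations ask the same kind of question. Given a closed subgroup $L$ of $G$ (playing the role of $G_C$, itself Polish and abelian) and a subgroup $S\le L$ (playing the role of $G_{Ca}G_{Cb}$), we must show that the three conditions ``$S$ is not Haar null in $L$'', ``$S$ is open in $L$'' and ``$S$ is non-meager in $L$'' are equivalent. The subgroup $S$ is analytic, being the continuous image of $G_{Ca}\times G_{Cb}$ under multiplication. Hence $S$ has the Baire property and is universally measurable, which is what makes the Haar null and meager arguments available.

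The plan is to prove the equivalences in a cycle.

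\emph{Open implies non-meager.} A nonempty open subset of a Polish space is non-meager.

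\emph{Non-meager implies open.} This is Pettis' theorem. Since $S$ has the Baire property and is non-meager, $S^{-1}S=S$ contains a neighbourhood of the identity, so $S$ is open. This is exactly the argument behind \cite{KP} Proposition 2.3, so it can be quoted.

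\emph{Open implies not Haar null.} An open subgroup $S$ of the Polish group $L$ has countable index, since $L$ is separable. If $S$ were Haar null, then $L$ would be a countable union of the translates $gS$, each Haar null, contradicting that the Haar null sets form a proper $\sigma$-ideal. Properness holds because $L$ is not Haar null in itself: any witnessing measure $\mu$ would need $\mu(L)=0$.

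\emph{Not Haar null implies open.} This is the main obstacle. I would invoke Christensen's analogue of the Steinhaus theorem for abelian Polish groups. If $X\subseteq L$ is universally measurable and not Haar null, then $X-X$ contains a neighbourhood of $0$. Christensen proved this in \cite{C-Haar} exactly in the abelian setting, and Solecki extended it beyond that setting, but commutativity is the standard hypothesis. Applying it to the analytic set $X=S$ gives $S=S-S\supseteq U$ for some open neighbourhood $U$ of $0$, so $S$ is open.

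If I had to argue this step directly, I would proceed by contradiction. Let $\mu$ be a Borel probability measure with $\mu(g+S)>0$ for some $g$. Commutativity makes left and right translates coincide, so the two-sided definition of Haar null reduces to translates. I would then use the continuity of $x\mapsto\mu((x+S)\cap(g+S))$ in the style of Christensen to find a neighbourhood of $0$ inside $S-S$. The main risk here is measurability, and it is handled by the fact that $S$ is analytic. Alternatively, the paper's own Example shows the non-abelian version fails, so any direct argument must genuinely use commutativity, and it does so exactly at this reduction to one-sided translates.

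Combining the cycle gives $a\ind^\HH_C b \iff a\ind^o_C b \iff a\ind^{nm}_C b$.
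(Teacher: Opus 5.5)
Your proposal is correct and follows essentially the paper's route: commutativity makes $G_{Ca}G_{Cb}$ a subgroup of $G_C$, Pettis' theorem gives non-meager $\Leftrightarrow$ open, countable index gives open $\Rightarrow$ not Haar null, and Christensen's theorem \cite{C-Haar} gives not Haar null $\Rightarrow$ open. Two small remarks: first, the paper uses that $G_{Ca}G_{Cb}$ is in fact Borel (\cite{KP} Lemma 2.6), not merely analytic, and this is the cleanest way to match the paper's Borel-hull definition of Haar null with Christensen's theorem for universally measurable sets; second, drop your optional ``direct'' sketch, since a single $\mu$ with $\mu(g+S)>0$ carries no information (take a Dirac mass) and $x\mapsto\mu((x+S)\cap(g+S))$ need not be continuous.
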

\begin{proof}
Let $H$, $K$ be closed subgroups of $G$.  Recall that by \cite{KP} Lemma 2.6, $V=HK$ is Borel, so is universally measurable, i.e., measurable with respect to the completion of every Borel probability measure. Then by \cite{C-Haar} Theorem 2, the set $F(V,V):=\{g\in G: gV\cap V\notin \I^\HH_G\}$ is open (possibly empty). Note that $V$ is a subgroup of $G$, since $G$ is commutative.

\begin{claim}
If $V$ is not Haar null, then it is open.
\end{claim}
\begin{claimproof}
Assume that $V\notin \I^\HH_G$.
Obviously, $1_G\in F(V,V)$, so $F(V,V)$ is a nonempty open set. Note that $V=VV^{-1}=\{g\in G: gV\cap V\neq \emptyset\}\supset F(V,V)$. Then $V$ has nonempty interior, so it is open.
\end{claimproof}

\begin{claim}
If $V$ is open, then it is not Haar null.
\end{claim}
\begin{claimproof}
Assume that $V$ is open.
Since $G$ is a Polish group, there is no uncountable family of pairwise disjoint open sets. Thus, $G/V$ is countable. If $V$ is Haar null, then $G$ is Haar null, which is impossible.
\end{claimproof}

From the above two claims, $V\notin \I_G^\HH$ iff $V$ is open. Applying this conclusion to each closed subgroup of $G$, we have $\ind^\HH=\ind^o$.

It is directly from Pettis Theorem (\cite{K-Book} Theorem 9.9) that $V$ is not meager iff $V$ is open. Hence, $\ind^{nm}=\ind^o$.

In summary, we have $\ind^\HH=\ind^o=\ind^{nm}$.
\end{proof}

Combining with Fact \ref{Axioms}, we have
\begin{coro}
Let $G$ be a commutative Polish group. Then $(G,\ind^\HH)$ is an admissible geometry.
\end{coro}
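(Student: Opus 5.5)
The corollary follows by transferring the axioms from $\ind^{nm}$ to $\ind^\HH$ through the equality $\ind^\HH=\ind^{nm}$ proved in the preceding theorem. That equality holds for every closed subgroup: each closed subgroup $L$ of $G$ is itself a commutative Polish group, so for closed $H,K<L$ we get $HK\in\I^\HH_L$ iff $HK\in\mathcal{M}_L$, where $\mathcal{M}_L$ denotes the meager ideal of $L$. It follows that $\I^\HH$ and the meager ideal scheme agree on every set of the form $HK$ with $H,K$ closed subgroups of a common closed subgroup $L$.

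The plan is to check the three clauses of Definition~\ref{admissible} for $\I^\HH$ one at a time.

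\emph{$\sigma$-ideal.} Each $\I^\HH_H$ is a $\sigma$-ideal; this was recalled above for Polish groups.

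\emph{Invariance.} In a commutative group conjugation is trivial, so $g^{-1}\I_Hg=\I_H=\I_{g^{-1}Hg}$ holds automatically. Translation invariance of Haar null sets is immediate from the definition, because $\mu(g'(hX)h'')=\mu((g'h)Xh'')$.

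\emph{Transitivity.} This clause only involves sets of the form $HK$ with $H,K$ closed subgroups of some closed $L$:
\begin{itemize}
\item $(H\cap K_2)K_3$ inside $K_2$;
\item $HK_2$ and $HK_3$ inside $K_1$.
\end{itemize}
By the observation above, each of these memberships in $\I^\HH$ is equivalent to the corresponding membership in the meager ideal scheme. So transitivity for $\I^\HH$ is equivalent to transitivity for the meager scheme. The latter holds by Fact~\ref{Axioms}, applied to the $G$-structure $M=\bigsqcup_i G/H_i$ built from the finitely many subgroups involved, as in the remark after Definition~\ref{admissible}. Since that structure is small and faithful, the axioms for $\ind^{nm}$ on $M$ translate exactly into the group-theoretic transitivity condition.

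The only delicate point is this last translation: I have to check that transitivity of $\ind^{nm}$ on a suitable $M$ really yields the subgroup statement for arbitrary closed $K_1>K_2>K_3$ and $H<K_1$. I would realise each of these subgroups as a pointwise stabiliser $G_A$ by taking $M$ to be the disjoint union of the coset spaces $G/H$ and $G/K_i$. Then $K_i$ is the stabiliser of the coset $K_i$, and the subgroups $K_2,K_3$ arise as stabilisers of the increasing sets $A\subset B\subset C$ once we include the earlier cosets. This holds because $K_1>K_2>K_3$ makes the stabiliser of $\{K_1,K_2\}$ equal to $K_2$, and similarly for $K_3$. Fact~\ref{Axioms} then gives the condition directly.
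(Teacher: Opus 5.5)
Your proposal is correct and follows the paper's route: the corollary is the preceding theorem ($\ind^\HH=\ind^{nm}$, applied inside every closed subgroup) combined with Fact~\ref{Axioms}, and your stabiliser computation ($G_A=K_1$, $G_B=K_2$, $G_C=K_3$, $G_{Aa}=H$, $G_{Ba}=H\cap K_2$) is what makes the transfer to the subgroup form of Transitivity precise. One small correction: $\bigsqcup_i G/H_i$ is in general neither small nor faithful (e.g.\ $G=\R^2$ acting on $G/\{0\}$ has uncountably many orbits on pairs), but you never use either property, since Fact~\ref{Axioms} only requires a $G$-structure.
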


\section{The order of geometries}\label{section-3}

As the NM-rank defined in \cite{KP} Definition 3.11, we define the $\I$-rank $\rk_\I$. 
\begin{definition}
Let $M$ be a $G$-structure, $a\in M^n$, $A\subset M$ be a finite set and $\alpha$ be an ordinal. We define 
$\rk_\I^M(a/A)\geq \alpha+1$ if there is a finite subset $B\subset M$ such that $a\not\ind_A^\I B $ and $\rk_\I^M(a/B)\geq \alpha$. 

We define $\rk_\I^M(a/A)=\sup\{\alpha: \rk_\I^M(a/A)\geq \alpha\}$ and write $\rk_\I^M(a/A)=\infty$ when $\rk_\I^M(a/A)\geq \alpha$ for every ordinal $\alpha$. 

We say that $M$ is \textbf{$\I$-stable} if  $\rk_\I^M(a/A)< \infty$ for every $a$ and $A$.
\end{definition}

In order to dispel any possible confusion, we offer a possibly superfluous explanation.
We always assume that $\rk_\I^M(a/A) \geq 0$. 
Thus, $\rk_\I^M(a/A) =0$ when there is no finite subset $B\subset M$ such that $a\not\ind_A^\I B $.

Note that, as \cite{KP} Proposition 3.12, for a strongly admissible geometry $(G,\ind^\I)$, the $\I$-rank admits Lascar inequalities. These inequalities are useful for studying $\I$-stable $G$-structures, but that is beyond the scope of this note.

Now we give a partial order on the collection of all strongly admissible geometries. To avoid discussing on the empty set, we always assume that $G$ is tame.
For two strongly admissible geometries on $G$ given by ideal schemes $\I$ and $\J$, we write $\ind^\I\leq \ind^\J$ if $\rk_{\I}^M\leq \rk_{\J}^M$ on every $G$-structure $M$. It is obvious that $\ind^\I\leq \ind^\J$ together with $\ind^\I\geq \ind^\J$ implies $\ind^\I= \ind^\J$. The transitivity of 
$\leq$ is immediate, so 
$\leq$ defines a partial order for strongly admissible geometries on $G$. Intuitively, when $\ind^\I \leq \ind^\J$, the relation $\ind^\J$ yields a finer independence relation but imposes a stronger requirement of stability.

For ideal schemes $\I$ and $\J$ on $G$, we write $\I\leq \J$ if $\I_H\subset \J_H$ for every closed subgroup $H$. It follows directly from the definition that $\I\leq \J$ implies $\ind^\I\leq \ind^\J$. More precisely, $\ind^\I\leq \ind^\J$ iff for all closed subgroups $K_1,K_2<H<G$, $K_1K_2\in \I_H$ implies $K_1K_2\in \J_H$. The `` if'' part is obvious and the ``only if'' part can be witnessed by $M:=G\sqcup (G/H)\sqcup (G/K_1)\sqcup (G/K_2)\sqcup(G/(K_1\cap K_2))$.

\begin{prop}
Assume that $G$ is tame.
Then there exists a minimal strongly admissible geometry. 
\end{prop}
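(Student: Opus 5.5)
The plan is to apply Zorn's lemma to the collection of strongly admissible geometries on $G$, ordered by $\leq$, looking for a minimal element. By the remark just before the statement, the geometry $\ind^\I$ is determined up to this order by the set
$$S(\I):=\{(H,K_1,K_2): K_1,K_2<H \text{ closed subgroups},\ K_1K_2\in \I_H\}.$$
Moreover, $\ind^\I\leq\ind^\J$ iff $S(\I)\subseteq S(\J)$. So the geometries form a set: a subset of the power set of the set of triples of closed subgroups. It is nonempty because $G$ is tame, which also handles the empty chain. It therefore suffices to show that every nonempty chain has a lower bound that is again a strongly admissible geometry.

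Let $\{\ind^{\I^\alpha}\}_\alpha$ be a nonempty chain, and fix a representing ideal scheme $\I^\alpha$ for each member. Define $\J_H:=\bigcap_\alpha \I^\alpha_H$ for every closed subgroup $H$. Then $K_1K_2\in\J_H$ iff $K_1K_2\in\I^\alpha_H$ for all $\alpha$, so $S(\J)=\bigcap_\alpha S(\I^\alpha)$. Hence $\ind^\J\leq\ind^{\I^\alpha}$ for every $\alpha$, provided $\J$ is strongly admissible. The easy axioms go as follows.
\begin{itemize}
\item ($\sigma$-ideal) An intersection of $\sigma$-ideals is a $\sigma$-ideal.
\item (Invariance) $H$-invariance and the conjugation condition $g^{-1}\J_Hg=\J_{g^{-1}Hg}$ pass to intersections, since conjugation commutes with intersection.
\item (Dichotomy) For closed $K<H$, $K\in\J_H$ iff $K\in\I^\alpha_H$ for all $\alpha$ iff $H/K$ is uncountable, because each $\I^\alpha$ satisfies the dichotomy.
\end{itemize}

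The main point is Transitivity, where intersections do not obviously preserve an ``iff''. Here we use that we have a chain and that the three sets involved are products of closed subgroups. Fix $K_1>K_2>K_3$ and $H<K_1$, and write $A=(H\cap K_2)K_3$ (a product of two closed subgroups of $K_2$), $B=HK_2$ and $C=HK_3$ (products of closed subgroups of $K_1$).
\begin{itemize}
\item Suppose $A\notin\J_{K_2}$ and $B\notin\J_{K_1}$. Then there are $\alpha,\beta$ with $A\notin\I^\alpha_{K_2}$ and $B\notin\I^\beta_{K_1}$. Let $\gamma$ be the smaller of $\alpha,\beta$ in the chain. Since $S(\I^\gamma)$ is contained in both $S(\I^\alpha)$ and $S(\I^\beta)$, we get $A\notin\I^\gamma_{K_2}$ and $B\notin\I^\gamma_{K_1}$. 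Transitivity for $\I^\gamma$ gives $C\notin\I^\gamma_{K_1}$, hence $C\notin\J_{K_1}$.
\item Conversely, if $C\notin\J_{K_1}$, pick $\gamma$ with $C\notin\I^\gamma_{K_1}$. Transitivity for $\I^\gamma$ gives $A\notin\I^\gamma_{K_2}$ and $B\notin\I^\gamma_{K_1}$, so neither lies in $\J$.
\end{itemize}

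Thus $\J$ is strongly admissible and $\ind^\J$ is a lower bound for the chain. Zorn's lemma then yields a minimal strongly admissible geometry. The step I expect to need the most care is this Transitivity argument: it works only because membership of product sets is monotone along the chain, which is exactly what the characterization of $\leq$ via $S(\cdot)$ provides.
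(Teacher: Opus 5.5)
Your proposal is correct and follows the same route as the paper: take a downward chain, intersect representing ideal schemes coordinatewise, verify strong admissibility, and apply Zorn's lemma. Your explicit Transitivity argument fills in the step the paper dismisses as ``easy to check''; it uses the paper's stated characterization of $\leq$ via membership of products $K_1K_2$ so that a single $\I^\gamma$ in the chain witnesses both non-memberships, and that is exactly where the chain hypothesis is needed.
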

\begin{proof}
Let $(\ind^\alpha)_\alpha$ be a downward chain of strongly admissible geometry. Assume that $\ind^\alpha$ is induced by the ideal scheme $\I^\alpha$. Let $\I:=\{\I_H\}$ with $\I_H=\bigcap_\alpha \I_H^\alpha$ for every closed subgroup $H$ of $G$. It is easy to check that $\I$ is strongly admissible and $\I\leq \I^\alpha$ for every $\alpha$. Since every downward chain admits a lower bound, by Zorn lemma, there exists a minimal strongly admissible geometry. 
\end{proof}

It is unknown whether the minimum strongly admissible geometry exists, but it does exist in some well-behaved examples, e.g., nilpotent Lie groups, $\SO(3)$. 

Now we study maximal strongly admissible geometries. In well-behave cases, the maximum strongly admissible geometry exists.
Note that the maximum strongly admissible geometry is the finest independence relation and imposes the strongest requirement of stability. I conjecture that $\ind^{nm}$ is the maximum strongly admissible geometry and therefore intrinsic when $G$ is a Polish group.

\begin{conj}
Let $G$ be a Polish group. Then $\ind^{nm}$ is the maximum strongly admissible geometry. 
\end{conj}

At least we can prove this conjecture under the additional condition that $G$ is $\sigma$-compact.

\begin{theorem}\label{thm-nm-max}
Let $G$ be a $\sigma$-compact Polish group. Then $\ind^{nm}=\ind^o$ is the maximum strongly admissible geometry. 
\end{theorem}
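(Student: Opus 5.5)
We need to show two things: first that $\ind^{nm}=\ind^o$ when $G$ is a $\sigma$-compact Polish group, and second that for every strongly admissible ideal scheme $\J$ on $G$ we have $\ind^{\J}\leq\ind^{nm}$. By the characterization of the order given above, the second claim amounts to the following: for closed subgroups $K_1,K_2<H<G$, if $K_1K_2$ is meager in $H$, then $K_1K_2\in\J_H$. Every closed subgroup of $G$ is again a $\sigma$-compact Polish group, and $\J|_H$ is strongly admissible. So we may replace $G$ by $H$ and assume $H=G$ throughout.

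\emph{Step 1: $\ind^{nm}=\ind^o$.} Let $K_1,K_2$ be closed subgroups of a $\sigma$-compact Polish group $G$. Write $K_1=\bigcup_n X_n$ and $K_2=\bigcup_m Y_m$ with each $X_n,Y_m$ compact. Then $K_1K_2=\bigcup_{n,m}X_nY_m$ is a countable union of compact sets, hence $F_\sigma$. Suppose $K_1K_2$ is non-meager. By Baire, some $X_nY_m$ has nonempty interior, so there is an open set $U\subset X_nY_m\subset K_1K_2$. The set $K_1K_2$ is left $K_1$-invariant and right $K_2$-invariant. Hence $K_1UK_2\subset K_1K_2$ is open and contains $U$. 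From here I would conclude that $K_1K_2$ itself is open.

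The cleanest route may be the one in \cite{KP}: non-meagerness of $K_1K_2$ means that $K_1/(K_1\cap K_2)$ is non-meager in $G/K_2$. Since $K_1$ is $\sigma$-compact, $K_1K_2/K_2$ is $\sigma$-compact in $G/K_2$, so it contains a compact set with nonempty interior. This set is an orbit of $K_1$ acting on the Polish space $G/K_2$. A non-meager orbit that is $F_\sigma$ has interior, and translating by $K_1$ shows the orbit is open, so $K_1K_2$ is open. Alternatively, we can simply invoke \cite{KP} Corollary 2.13 (the compact/$\sigma$-compact setting). Either way we get $\ind^{nm}=\ind^o$.

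\emph{Step 2: maximality.} Suppose $K_1K_2$ is meager in $G$. By Step 1, $K_1K_2$ is not open. We want $K_1K_2\in\J_G$. Consider the quotient $G/K_2$ and the orbit $O=K_1K_2/K_2\cong K_1/(K_1\cap K_2)$. I would use Transitivity of $\J$ to reduce to the Dichotomy axiom. Put $K_1=G$ ("$K_1$" in Definition~\ref{admissible}), and take the chain $G>K_2>K_2$ with $H:=K_1$. Transitivity then gives a decomposition of the question "$K_1K_2\in\J_G$?" through $G$ versus $K_2$.

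To actually reach Dichotomy, I expect the key trick to be a countability argument. Since $G$ is $\sigma$-compact and $K_1K_2$ is meager, $G$ is not covered by countably many translates $gK_1K_2$. I would try to show $K_1K_2\in\J_G$ by contradiction. Suppose $K_1K_2\notin\J_G$. Symmetry and Transitivity, applied with the chain $G>K_1>K_1\cap K_2$, give
\[K_1K_2\notin\J_G \iff K_1(K_1\cap K_2)... \]
More usefully, they give $(K_1\cap K_2)$ being "large" relative to the pieces. Combined with Dichotomy, this should force $G/K_2$ to be covered by countably many $K_1$-orbits, or $K_1/(K_1\cap K_2)$ to be countable, in a way that makes $K_1K_2$ open (via Baire on $\sigma$-compact pieces). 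That would contradict meagerness.

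The main obstacle is exactly this step: extracting from the abstract axioms ($\sigma$-ideal, Invariance, Transitivity, Dichotomy) the conclusion that a non-open $K_1K_2$ lies in $\J_G$. My first attempt will use the $\sigma$-compactness as follows. Write $K_1K_2=\bigcup_{n,m}X_nY_m$; each $X_nY_m$ is compact and nowhere dense. Since $\J_G$ is a $\sigma$-ideal, it suffices to place each piece, or a suitable cover of $K_1K_2$ by countably many double-coset-type sets, in $\J_G$. I would try to cover $K_1K_2$ by countably many translates of a set of the form $K_1L$ or $LK_2$, where $L$ is a closed subgroup with $G/L$ uncountable. Such sets lie in $\J_G$ by Dichotomy plus Transitivity and Invariance. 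If the direct cover fails, the fallback is the uncountable-orbit argument in $G/K_2$: meagerness should produce uncountably many pairwise disjoint translates $g_\alpha K_1K_2$, and Invariance together with the $\sigma$-ideal property should then forbid $K_1K_2\notin\J_G$, much as in the Dichotomy proof for Haar measure.
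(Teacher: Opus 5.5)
Your Step 1 is essentially the paper's argument and is fine. To finish it: if $U\subseteq K_1K_2$ is open and $k_1k_2\in U$, then every $ab\in K_1K_2$ lies in the open set $ak_1^{-1}Uk_2^{-1}b\subseteq K_1K_2$.

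The gap is in your opening reduction, which reverses the order. A bigger ideal means more dependence and larger ranks. So by the paper's characterization of $\leq$, the statement $\ind^{\J}\leq\ind^{nm}$ means: if $K_1K_2\in\J_H$, then $K_1K_2$ is meager in $H$. You set out to prove the opposite inclusion, that every meager $K_1K_2$ lies in $\J_H$. That is $\ind^{nm}\leq\ind^{\J}$, i.e.\ that $\ind^{nm}$ is the \emph{minimum}, and it is false.

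For a counterexample, take $G=\SO(3)$ with the scheme from the paper's example for $\Theta=\emptyset$. Here $\J_G$ is the $\sigma$-ideal generated by cosets of proper closed subgroups, each $T_u$ and $N_u$ carries the ideal of countable sets, and finite subgroups carry $\{\emptyset\}$. This scheme is strongly admissible. For $u\neq v$, the compact surface $T_uT_v$ is nowhere dense, hence meager, in $G$. On the other hand, by Baire category on the surface itself, it is not covered by countably many cosets of subgroups of dimension at most $1$. So $T_uT_v\notin\J_G$.

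Hence no combination of the axioms can finish your Step 2. In particular, the Haar-style counting in your fallback works only because compact sets have finite Haar measure, and an abstract $\sigma$-ideal carries no such information.

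The correct direction is the easy one, and Step 1 already supplies the key input; this is exactly the paper's proof. Suppose $K_1K_2$ is non-meager in $H$. By Step 1 it is open in $H$. Since $H$ is Polish (hence Lindel\"of), countably many left translates $h_iK_1K_2$ cover $H$. If $K_1K_2\in\J_H$, then $H$-invariance and the $\sigma$-ideal property give $H\in\J_H$. This contradicts Dichotomy with $K=H$, since $H/H$ is countable. Hence $K_1K_2\in\J_H$ forces $K_1K_2$ to be meager in $H$, which is precisely $\ind^{\J}\leq\ind^{nm}$. Transitivity is not needed at all.
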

\begin{proof}
Let $\I$ be a strongly admissible ideal scheme on $G$. Let $H,K<L$ be closed subgroups of $G$. Since $G$ is $\sigma$-compact, $H=\bigcup_{n<\omega} C_n$ and $K=\bigcup_{m<\omega}D_m$ with $C_n$ and $D_m$ compact. Then $HK=\bigcup_{n,m} C_n D_m$ where each $C_nD_m$ is compact and hence closed. 

If $HK$ is not meager in $L$, then there are $n,m<\omega$ such that $C_nD_m$ has nonempty interior. Then $HK$ has nonempty interior and it is easy to see that $HK$ is open. Hence, $\ind^{mn}=\ind^o$. Since $G$ is Polish, $G$ can be covered by countably many left translates of $HK$. As $\I_L$ is an $L$-invariant $\sigma$-ideal, we have $HK\notin \I_L$. 

Hence, we have shown that $HK\in\I_L$ implies $HK$ being meager in $L$. It follows directly from the definition that $\ind^\I\leq \ind^{nm}$.
\end{proof}

We now specialize to the case of Lie groups. Note that our Lie groups are second countable, finite dimensional and over the reals. Let us classify strongly admissible geometries for the compact Lie group $G=\SO(3)$. This provides an example where there are a lot of pairwise incomparable strongly admissible geometries.

\begin{example}
Let $G$ be the special orthogonal group $\SO(3)$. 

For $u\in \RP^2$, an unoriented axis of $\R^3$, we let $T_u$ be the subgroup of all rotations about the axis $u$. It is easy to see that $T_u\cong \SO(2)$. Let $N_u$ be the normalizer of $T_u$ in $G$. Then $N_u/T_u\cong\Z/2\Z$ and $N_u\cong \mathrm{O}(2)$.

Since $\mathfrak{so}(3)$, the Lie algebia of $G=\SO(3)$, does not have $2$-dimensional subalgebra, $G$ does not have $2$-dimensional closed subgroups. It is not hard to check that the closed subgroups of $G$ are
\begin{itemize}
\item ($0$-dimensional) finite subgroups;

\item ($1$-dimensional) $T_u$ and $N_u$ for $u\in \RP^2$;

\item ($3$-dimensional) $G$ itself.
\end{itemize}

Let $\I$ be a strongly admissible geometry for $G$. Obviously, $\I_H=\{\emptyset\}$ when $H$ is finite.
Moreover, when $H=T_u$ or $N_u$, it is easy to see that for closed subgroups $K_1,K_2<H$, $K_1K_2\in \I_H$ iff $K_1K_2$ is finite iff $K_1K_2$ is meager iff it is Haar null. Although there are many choices of the ideal on $H$, they all give the same induced geometry. Hence, the key point is to determine the ideals on $G=\SO(3)$. Since $N_u/T_u\cong\Z/2\Z$, we only need to check whether $T_uT_v$ belongs to $\I_G$ for $u\neq v\in \RP^2$.

For $u\neq v\in \RP^2$, $T_u\cap T_v=\{1_G\}$ and $T_u\times T_v\rightarrow \SO(3):(s,t)\mapsto st$ is an embedding. So $T_uT_v$ is a compact $2$-dimensional surface.
Moreover, we let $\theta=\angle(u,v)$ be the angle between the axes $u$ and $v$. Note that we always set $\theta\in(0,\frac{\pi}{2}]$. Let $p,q$ on the unit sphere $S^2$ with $u=[p]$, $v=[q]$ and inner product of $p$ and $q$ in $\R^3$, $\langle p,q \rangle=\cos\theta$. Then it is easy to check that 
$$T_uT_v=\{g\in G: \langle p,gq \rangle=\cos \theta \}.$$
Moreover, one can see that every left-right translate of $T_uT_v$ has the from 
$$S_{p',q',c}:=\{g\in G: \langle p',gq' \rangle= c \}$$
where $p',q'\in S^2$ and $|c|=\cos \theta$.

\begin{claim}
Let $p,q,p'q'\in S^2$ with $[p]\neq [q]$ and $[p']\neq [q']$. Let $c,c'\in (-1,1)$. Assume that $S_{p,q,c}\cap S_{p',q',c'}$ has nonempty interior in $S_{p,q,c}$ (or $S_{p',q',c'}$). Then $[p]=[p']$, $[q]=[q']$ and $|c|=|c'|$.
\end{claim}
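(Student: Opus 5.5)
The plan is to use real-analyticity to upgrade "overlap with nonempty interior" to equality of the two surfaces. Then the parameters $[p],[q],|c|$ can be read off from the surface as a set, using the left and right stabilizers together with the classification of closed subgroups of $\SO(3)$ given above.

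\emph{Step 1: $S_{p,q,c}$ is a connected, compact, real-analytic embedded surface.} Write $f(g)=\langle p,gq\rangle-c$, which is a real-analytic function on $G$. For $X\in\mathfrak{so}(3)$, viewed as $x\times(\cdot)$, we have $\frac{d}{dt}\big|_{t=0} f(\exp(tX)g)=\langle p, x\times gq\rangle=\det(p,x,gq)$. This vanishes for all $x$ only if $gq=\pm p$. That is impossible on $S_{p,q,c}$, since $|c|<1$. So $c$ is a regular value and $S_{p,q,c}$ is a compact embedded analytic surface.

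For connectedness, fix some $g_0\in S_{p,q,c}$, which exists since $|c|<1$. Left multiplication by $T_{[p]}$ preserves $S_{p,q,c}$, so $T_{[p]}g_0T_{[q]}\subset S_{p,q,c}$. Conversely, if $gq$ lies on the circle $\{x:\langle p,x\rangle=c\}$, then some element of $T_{[p]}$ moves $g_0q$ to $gq$, and $G_q=T_{[q]}$. Hence $S_{p,q,c}=T_{[p]}g_0T_{[q]}$, which is a continuous image of a torus and therefore connected.

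\emph{Step 2: from overlap to equality.} Suppose $S=S_{p,q,c}$ and $S'=S_{p',q',c'}$, and $S\cap S'$ contains a nonempty open subset $W$ of $S$. The function $f'(g)=\langle p',gq'\rangle-c'$ restricts to a real-analytic function on the connected analytic manifold $S$ and vanishes on $W$. By the identity theorem it vanishes on all of $S$, so $S\subset S'$.

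Now $S$ is a compact $2$-manifold sitting inside the connected $2$-manifold $S'$. By invariance of domain $S$ is open in $S'$, and it is closed because it is compact. Hence $S=S'$. The symmetric version of the hypothesis, with interior taken in $S'$, is handled the same way.

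\emph{Step 3: recover $[p]$, $[q]$ and $|c|$ from the set.} Consider the left stabilizer $L=\{h\in G: hS=S\}$. It is a closed subgroup containing $T_{[p]}$, and it is not all of $G$, since otherwise $S=G$, contradicting $\dim S=2$. By the classification of closed subgroups above, $L$ is $T_{[p]}$ or $N_{[p]}$. In both cases the identity component of $L$ is $T_{[p]}$, so $L$ determines $[p]$. Applying the same argument to $S'=S$ gives $[p]=[p']$. The right stabilizer similarly gives $[q]=[q']$.

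After replacing $p',q'$ by $\pm p,\pm q$, which only changes the sign of $c'$, we have $S_{p,q,c}=S_{p,q,\pm c'}$. Since $g\mapsto gq$ maps $G$ onto $S^2$, the set $\{gq:g\in S\}$ equals the circle $\{x:\langle p,x\rangle=c\}$. That circle determines $c$ once $p$ is fixed, so $c=\pm c'$, i.e.\ $|c|=|c'|$.

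The main obstacle is Step 2, in particular justifying that $S_{p,q,c}$ is connected and analytic so that the identity theorem applies. The double-coset description $T_{[p]}g_0T_{[q]}$ in Step 1 is the key input there. Step 3 is routine given the subgroup classification.
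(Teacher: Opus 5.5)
Your proof is correct and follows the same route as the paper. Both arguments parametrize $S_{p,q,c}$ as the double coset $T_{[p]}g_0T_{[q]}$ and use real-analyticity to get $S_{p,q,c}\subseteq S_{p',q',c'}$; the paper applies the identity theorem to $\chi(s,t)=\langle p',sg_0tq'\rangle$ on the torus, while you apply it on $S$ itself after checking that $c$ is a regular value. Your invariance-of-domain step and your stabilizer and image-circle argument in Step 3 supply exactly the details the paper compresses into ``we have $S_{p,q,c}=S_{p',q',c'}$ which implies \ldots''.
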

\begin{claimproof}
Fix $g_0\in S_{p,q,c}$. Then $S_{p,q,c}$ can be parameterized by $T_{[p] }\times T_{[q]}$ as $S_{p,q,c}=\{sg_0t:s\in T_{[p]}, t\in T_{[q]}\}$. Let $$\chi: T_{[p] }\times T_{[q]}\rightarrow \R: (s,t)\mapsto \langle p', sg_0tq' \rangle.$$
It is clear that when $sg_0t\in S_{p',q',c'}$, $\chi(s,t)=c'$. Since $\chi$ is a real analytic function, $\chi\equiv c'$ when $\chi=c'$ on an open set. Hence, when $S_{p,q,c}\cap S_{p',q',c'}$ has nonempty interior in $S_{p,q,c}$, we have $S_{p,q,c}=S_{p',q',c'}$ which implies $[p]=[p']$, $[q]=[q']$ and $|c|=|c'|$.
\end{claimproof}

Hence, by the Baire Category Theorem, $T_uT_v$ can not be covered by countably many left-right translates of $T_{u_n}T_{v_n}$ with $\angle(u_n,v_n)\neq \angle(u,v)$.

For given $\I$, we let $$\Theta_\I:=\{\theta\in(0,\frac{\pi}{2}]: T_u T_v\in \I_G \text{ for some (all) } u,v \text{ with } \angle(u,v)=\theta \}$$
Here, ``some'' and ``all'' are equivalent, because $\I_G$ is invariant by conjugations. 

Conversely, given $\Theta\subset(0,\frac{\pi}{2}]$, let 
$\I_G^\Theta$ be the hereditary $\sigma$-ideal generated by 
\begin{itemize}
\item left-right cosets of proper closed subgroups;
\item left-right translates of $T_uT_v$ for $u\neq v\in \RP^2$ with $\angle(u,v)\in\Theta$.
\end{itemize}

The previous analysis on $T_uT_v$  shows that $T_u T_v\in \I_G^\Theta$ iff $\angle(u,v)\in \Theta$. Combining $\I_G^\Theta$ and the unique strongly admissible geometries on proper subgroups, we give an ideal scheme $\I^\Theta$ for $G$. It is easy to check that $\I^\Theta$ is strongly admissible, and we leave the details to the readers.

In summary, we have an order-preserving one-to-one correspondence:
$$(\{\text{Strongly admissible geometries $\ind^\I$ for } \SO(3) \},\leq) \overset{\I\mapsto \Theta_\I}{\underset{\I^\Theta\mapsfrom\Theta}{\rightleftarrows}}(\mathcal{P}((0,\frac{\pi}{2}]),\subset).$$
Then $\ind^{\I^\emptyset}$ is the minimum and $\ind^{\I^{(0,\frac{\pi}{2}]}}=\ind^{nm}=\ind^\HH$ is the maximum. Moreover, we can find a collection of pairwise incomparable strongly admissible geometries of size $2^\mathfrak{c}$.
\end{example}

We now present another example. The proof is analogous, and we leave the details to the readers.

\begin{example}\label{se2}
Let $G$ be the special Euclidean group of the plane $\mathrm{SE(2)}\cong \R^2\rtimes \SO(2)$.

Since our ideals are $\sigma$-ideals, it suffices to study connected closed subgroups. One can check that the proper connected closed subgroups of $G$ are
\begin{itemize}
\item ($0$-dimensional) $\{1_G\}$;

\item ($1$-dimensional) $T_p$, the subgroup of all rotations about the point $p$, for $p\in \R^2$;

\item ($1$-dimensional) $P_l$, the subgroup of all translations along $l$ , for $l\in \RP^1$, an unoriented axis in $\R^2$;

\item ($2$-dimensional) $V\cong (\R,+)^2$, the subgroup of translations.
\end{itemize}

By checking or using Theorem \ref{nil-unique} below, we know that $\ind^{nm}$ is the unique strongly admissible geometry on these proper subgroups. So the strongly admissible geometries of $G=\mathrm{SE}(2)$ are determined by the ideals on $G$.

Given $\Delta\subset(0,\infty)$, we let $\I^\Delta_G$ be the hereditary $\sigma$-ideal generated by

\begin{itemize}
\item left-right cosets of proper closed subgroups;

\item line-rotation surfaces, i.e., left-right translates of $T_p P_l$ and $P_lT_p$ for $p\in \R^2$ and $l\in \RP^1$;

\item picked rotation-rotation surfaces, i.e, left-right translates of $T_pT_q$ for $p,q\in \R^2$ with $|p-q|\in \Delta$.
\end{itemize}

Let $\I^\Delta$ be the ideal scheme given by $\I_G^\Delta$. It is not hard to check that $\I^\Delta$ is strongly admissible and $T_pT_q\in \I_G^\Delta$ iff $|p-q|\in \Delta$.

Conversely, given a strongly admissible ideal $\I$, we let
$$\Delta_\I:=\{\delta\in (0,\infty): T_pT_q\in \I_G \text{ for some (all) } p,q \text{ with } |p-q|=\delta\}$$

Then we have an order-preserving one-to-one correspondence:
$$(\{\text{Strongly admissible geometries $\ind^\I$ for } \mathrm{SE}(2)\},\leq) \overset{\I\mapsto \Delta_\I}{\underset{\I^\Delta\mapsfrom\Delta}{\rightleftarrows}}(\mathcal{P}((0,\infty)),\subset).$$

\end{example}

Let us end this note by showing the uniqueness of the strongly admissible geometry for nilpotent Lie groups. First, note that since our Lie group $G$ is second countable, its connected component $G^0$ is of countable index in $G$. As our ideals are $\sigma$-ideals, every ideal scheme $\I$ on $G$ is determined by $\I|_{G^0}$ on $G^0$. Hence, It suffices to consider connected Lie groups. Also note that every closed subgroup of a nilpotent Lie group is nilpotent. In order to proceed by induction on the dimension, we give a result for nilpotent Lie groups.

\begin{lemma}
Let $G$ be a connected nilpotent Lie group and $K$ a connected closed proper  subgroup. Then there is a connected closed normal subgroup $N$ such that $K<N\lhd G$ and $G/N$ is a $1$-dimensional connected Lie group.
\end{lemma}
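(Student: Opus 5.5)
We pass to Lie algebras, where the statement reduces to a standard fact about nilpotent Lie algebras: every proper subalgebra lies in an ideal of codimension one. Let $\mathfrak g$ and $\mathfrak k$ be the Lie algebras of $G$ and $K$. Since $K$ is connected, closed and proper, and $G$ is connected, $\mathfrak k\subsetneq\mathfrak g$. If $\mathfrak k=\mathfrak g$, the connected subgroups would coincide, giving $K=G$.

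The key algebraic step is to show $\mathfrak k+[\mathfrak g,\mathfrak g]\neq\mathfrak g$. Suppose instead that $\mathfrak k+[\mathfrak g,\mathfrak g]=\mathfrak g$. Writing $\mathfrak g^{(i)}$ for the lower central series, we have $\mathfrak g^{(2)}=[\mathfrak g,\mathfrak g]=[\mathfrak k+\mathfrak g^{(2)},\mathfrak k+\mathfrak g^{(2)}]\subset[\mathfrak k,\mathfrak k]+\mathfrak g^{(3)}$. Iterating, we get $\mathfrak g=\mathfrak k+\mathfrak g^{(n)}$ for every $n$. Nilpotency gives $\mathfrak g^{(n)}=0$ for large $n$, so $\mathfrak g=\mathfrak k$, a contradiction.

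Hence $\mathfrak k+[\mathfrak g,\mathfrak g]$ is a proper subspace containing $[\mathfrak g,\mathfrak g]$. Choose a hyperplane $\mathfrak n\supset\mathfrak k+[\mathfrak g,\mathfrak g]$. Any subspace containing $[\mathfrak g,\mathfrak g]$ is an ideal, so $\mathfrak n$ is an ideal of codimension $1$ containing $\mathfrak k$.

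Next we pass back to groups. Let $N$ be the connected (a priori immersed) subgroup with Lie algebra $\mathfrak n$. Because $\mathfrak n$ is $\mathrm{Ad}(G)$-invariant and $G$ is connected, $N$ is normal, and $K\subset N$ since $K$ is generated by $\exp\mathfrak k\subset\exp\mathfrak n$. The main obstacle is closedness of $N$, since a connected normal subgroup of a nilpotent group need not be closed; the irrational line in a torus is a codimension-one example.

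To handle closedness, I would use the quotient map $G\to G/[G,G]$ and work with $\overline{[G,G]}$. Equivalently, I would factor through the abelianization: let $A=G/\overline{[G,G]}$, a connected abelian Lie group $\cong\R^a\times\mathbb T^b$. The image $\bar K$ of $K$ has closure $\overline{\bar K}$, a closed connected subgroup of $A$. I need $\overline{\bar K}\neq A$. Since $K$ is closed, I would argue via the bracket step above applied to $\overline{K[G,G]}$: its Lie algebra is still proper. This is the delicate point, and I would verify it using the fact that $K\overline{[G,G]}$ is closed when $[G,G]$ is closed. The derived group of a simply connected nilpotent group is closed; in general, $[G,G]$ of a connected nilpotent Lie group is closed, a known result I would cite.

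Given closedness, $A/\overline{\bar K}$ is a nontrivial connected abelian Lie group $\R^{a'}\times\mathbb T^{b'}$. Choose a closed codimension-one connected subgroup $B$ of it, for instance a factor omitting one coordinate. Take $N$ as the preimage of $B$ in $G$. Then $N$ is closed, normal, connected (it is an extension of connected groups), contains $K$, and $G/N\cong\R$ or $\mathbb T$, which is a $1$-dimensional connected Lie group.
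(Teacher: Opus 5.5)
There is a genuine gap. Your Lie-algebra step is fine: for a proper subalgebra $\mathfrak k$ of a nilpotent $\mathfrak g$ one has $\mathfrak k+[\mathfrak g,\mathfrak g]\neq\mathfrak g$, so $\mathfrak k$ lies in a codimension-one ideal. The gap is in passing back to \emph{closed} subgroups, which is the real content of the lemma.

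\textbf{The cited closedness result is false.} The commutator subgroup of a connected nilpotent Lie group need not be closed. Let $\tilde G=H_3(\R)\times\R$, with Lie algebra spanned by $X,Y,Z,W$, where $[X,Y]=Z$ and $W$ is central. Its center $\exp(\R Z)\times\R$ is a copy of $\R^2$. Choose a lattice $D$ in it, e.g.\ generated by $(1,\sqrt2)$ and $(0,1)$, so that the image of the line $\exp(\R Z)\times\{0\}$ in $\R^2/D$ is dense. In $G=\tilde G/D$, $[G,G]$ is exactly that dense irrational winding in a central $2$-torus.

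\textbf{The other two closedness claims also fail.} First, $K[G,G]$ need not be closed even when $[G,G]$ is. In $G'=H_3(\R)\times\mathbb T^2$, take the one-parameter subgroup $K'=\{(\exp tZ,(\alpha t,\beta t)\bmod\Z^2)\}$ with $\alpha/\beta\notin\mathbb Q$. Then $K'$ is closed and $[G',G']=\exp(\R Z)\times\{0\}$ is closed, but $K'[G',G']$ is not closed. Second, once you replace $[G,G]$ by $\overline{[G,G]}$, its Lie algebra can be strictly larger than $[\mathfrak g,\mathfrak g]$. So your inequality $\mathfrak k+[\mathfrak g,\mathfrak g]\neq\mathfrak g$ no longer gives $\overline{K\,\overline{[G,G]}}\neq G$.

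\textbf{Why your route cannot be patched as it stands.} In the group $G=\tilde G/D$ above, let $L$ be the image of $H_3(\R)\times\{0\}$. Then $L$ is a proper connected normal subgroup whose closure contains the central torus, hence equals $G$. So no closed proper $N$ contains $L$. Any proof must therefore use the closedness of $K$ in an essential way, and your sketch only gestures at this (``this is the delicate point'').

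\textbf{How the paper handles closedness.} It never leaves the class of closed subgroups. Engel's theorem gives $N_{\mathfrak g}(\mathfrak k)\supsetneq\mathfrak k$, and $N_G(K)^0$ is automatically closed with Lie algebra $N_{\mathfrak g}(\mathfrak k)$. Iterating produces a closed connected proper normal subgroup containing $K$. Taking one of maximal dimension then forces $\dim G/N=1$, because a connected nilpotent Lie group $Q$ of dimension at least $2$ has a nontrivial proper closed connected normal subgroup: $Z(Q)^0$ if $Q$ is non-abelian, or a coordinate factor of $\R^a\times\mathbb T^b$ otherwise.
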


\begin{proof}
First, we show that there is a connected closed normal proper subgroup of $G$ containing $K$. Let $\mathfrak{g}$ and $\mathfrak{k}$ be the Lie algebras of $G$ and $K$ respectively. Note that the action of $\mathfrak{k}$ on $\mathfrak{g}/\mathfrak{k}$ is ad-nilpotent, and by Engel Theorem, there is $X\in \mathfrak{g}$ such that $0\neq X+\mathfrak{k}\in \mathfrak{g}/\mathfrak{k}$ is killed by $\mathfrak{k}$, and equivalently, $[X,\mathfrak{k}] \subset \mathfrak{k}$. Hence, the normalizer
$N_{\mathfrak{g}}(\mathfrak{k}):=\{X\in\mathfrak{g}:[X,\mathfrak{k}]\subset \mathfrak{k}\}$ is strictly larger than $\mathfrak{k}$. Note that the Lie algebra of $N_G(K)^0$ is $N_{\mathfrak{g}}(\mathfrak{k})$, so $K<N_G(K)^0$ with $\dim(K)<\dim(N_G(K)^0)$. Since $\dim (G)$ is finite, this process always terminates after finitely many repetitions. Then we can find a connected closed subgroup $N$ of $G$ containing $K$ such that $\dim(N)<\dim(G)$ and $N_G(N)^0=G$, which implies that $N$ is a proper normal subgroup of $G$.

Now we assume in addition that $N$ is the one with the maximum dimension. Let $Q=G/N$. Note that $Q$ is a connected nilpotent Lie group. Assume that $\dim(Q)\geq 2$. If $Q$ is not commutative, then the center is a nontrivial closed normal subgroup; if $Q$ is commutative, it is a product of copies of $(\R,+)$ and torus $\R/\Z$, so obviously it has a nontrivial closed normal subgroup. Let $H$ be a  connected closed normal nontrivial subgroup of $Q$. Then the preimage of 
$H$ under the quotient map 
$G\rightarrow G/N=Q$ would violate the maximality of 
$N$.
\end{proof}

Now we prove the theorem.

\begin{theorem}\label{nil-unique}
Let $G$ be a nilpotent Lie group. Then $\ind^{mn}=\ind^\HH$ is the unique strongly admissible geometry for $G$. 
\end{theorem}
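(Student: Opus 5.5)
The plan is to show that every strongly admissible ideal scheme $\I$ on a nilpotent Lie group $G$ induces exactly $\ind^{nm}$. Since $\I^\HH$ is strongly admissible on any locally compact group (by the theorem of Section \ref{section-2} and the subsequent remark), this also gives $\ind^\HH=\ind^{nm}$.

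\emph{Upper bound.} A Lie group is a $\sigma$-compact Polish group, so Theorem \ref{thm-nm-max} gives $\ind^\I\leq\ind^{nm}=\ind^o$. It remains to prove the converse inclusion: for closed subgroups $H,K<L$, if $HK$ is meager in $L$ (equivalently, not open), then $HK\in\I_L$.

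\emph{Reductions.} I prove this by induction on $\dim L$, using that $\I|_L$ is strongly admissible on the nilpotent Lie group $L$.
\begin{itemize}
\item Since $L^0$, $H^0$ and $K^0$ have countable index, $HK$ is a countable union of sets $x\,(H\cap L^0)^0 (K\cap L^0)^0\,y$.
\item By invariance (including conjugation) and the $\sigma$-ideal property, it suffices to treat connected $L$ and connected $H,K$.
\item Openness also passes through these countable unions, by the Baire category theorem.
\item If $K=L$, then $HK=L$ is open and there is nothing to prove.
\item In dimension $0$ everything is trivial.
\end{itemize}

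\emph{Inductive step.} Let $K<L$ be connected and proper. Apply the preceding Lemma to obtain a connected closed normal subgroup $N$ with $K<N\lhd L$ and $\dim L/N=1$. Apply Transitivity of Definition \ref{admissible} with $K_1=L>K_2=N>K_3=K$:
$$HK\notin\I_L \iff HN\notin\I_L \text{ and } (H\cap N)K\notin\I_N.$$
The same equivalence holds for ``not meager'' in place of ``$\notin\I$'', since $\ind^{nm}$ satisfies Transitivity by Fact \ref{Axioms}. It therefore suffices to match the two conditions on the right-hand side.

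\emph{The quotient condition.} Because $N$ is normal, $HN$ is a subgroup, and its image in the $1$-dimensional connected group $L/N$ is the image of the connected group $H$. That image is a connected subgroup of $\R$ or of the circle, hence either trivial or all of $L/N$.
\begin{itemize}
\item If it is all of $L/N$, then $HN=L$, which is neither in $\I_L$ nor meager.
\item If it is trivial, then $HN=N$ has uncountable index in $L$. Dichotomy gives $N\in\I_L$, and $N$ is also meager.
\end{itemize}
So $HN\in\I_L$ if and only if $HN$ is meager in $L$.

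\emph{The lower-dimensional condition.} Here $H\cap N$ and $K$ are closed subgroups of $N$, and $\dim N<\dim L$. The induction hypothesis on $N$, with the restricted scheme $\I|_N$, gives $(H\cap N)K\in\I_N$ if and only if $(H\cap N)K$ is meager in $N$. Combined with the quotient condition, this closes the induction.

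\emph{Main obstacle.} The step needing the most care is the reduction to connected $H$ and $K$, because $H\cap N$ need not be connected. The induction hypothesis must therefore be stated for arbitrary closed subgroups of a lower-dimensional group, with the connectedness reduction re-applied inside $N$. One also has to check that the countable-union bookkeeping respects both $\I$ (via the $\sigma$-ideal property and invariance) and meagerness (via Baire category) simultaneously.
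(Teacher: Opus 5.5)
Your proof is correct, but it is organized differently from the paper's.

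\textbf{The paper's route.} The paper fixes connected $H,K$ with $HK$ meager and proves only that $HK\in\I_G$, in three steps:
\begin{enumerate}
\item If the closed subgroup $S$ generated by $HK$ is proper, Dichotomy gives $S\in\I_G$, and hence $HK\in\I_G$.
\item Otherwise it shows by contradiction that $(H\cap N)K$ is meager in $N$. If not, Theorem \ref{thm-nm-max} ($\ind^{nm}=\ind^o$) makes $(H\cap N)K$ open in $N$. Openness of the multiplication map $H\times N\to G$, which tacitly uses $HN=G$, would then make $HK$ open.
\item Induction together with one direction of Transitivity for $\I$ gives $HK\in\I_G$.
\end{enumerate}
The reverse inclusion comes from Theorem \ref{thm-nm-max}.

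\textbf{Your route.} You run Transitivity in parallel for $\I$ and for the meager scheme, which is admissible by Fact \ref{Axioms}, and then match the two right-hand conditions. Your observation that the connected image of $H$ in the one-dimensional quotient forces $HN\in\{N,L\}$ replaces both the $S$-reduction and the open-map argument.

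\textbf{Comparison.} Your version yields the biconditional $HK\in\I_L\iff HK$ meager in one pass. Consequently your ``upper bound'' paragraph invoking Theorem \ref{thm-nm-max} is not actually needed. The price is that you rely on Krupi\'{n}ski's transitivity theorem for $\ind^{nm}$ as an external input. The paper gets by with Theorem \ref{thm-nm-max} and the openness of $H\times N\to G$, at the cost of an extra case split and a less transparent use of the one-dimensional quotient.

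\textbf{One small point in your reductions.} Invariance and the $\sigma$-ideal property let you pass from $H,K$ to $H^0,K^0$ inside a fixed $\I_L$. They do not by themselves relate $\I_L$ to $\I_{L^0}$, which is what ``it suffices to treat connected $L$'' requires. You can fix this in either of two ways:
\begin{itemize}
\item Use Transitivity with $K_1=L$, $K_2=L^0$, noting that $L^0\notin\I_L$ by Dichotomy since $L^0$ has countable index.
\item Avoid the issue by applying the Lemma inside $L^0$ and taking $K_1=L$ directly in your argument. Then $H^0N\in\{N,L^0\}$, where $L^0$ is open of countable index and $N$ has uncountable index and empty interior in $L$.
\end{itemize}
The paper glosses over the same point with its remark that $\I$ is determined by $\I|_{G^0}$.
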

\begin{proof}
We may assume $G$ is connected and restrict our attention to connected closed subgroups. Induction on $\dim(G)$.

Let $\I$ be a strongly admissible ideal scheme for $G$. By induction, for every closed subgroup $L$ of $G$, $\I|_H$ induces $\ind^{nm}=\ind^\HH$ on $L$. So it suffices to study $\I_G$.  Let $H,K$ be connected closed proper subgroups such that $HK$ is meager in $G$. It suffices to show that $HK\in \I_G$.

Let $S$ be the closure of the group generated by $HK$. It is clear that $S$ is a closed subgroup of $G$. If $S$ is a proper subgroup, then $G/S$ is uncountable, which implies that $S\in \I_G$ and $HK\in \I_G$. So we assume that $S=G$.

By the previous lemma, we let $N$ be a  connected closed normal subgroup of $G$ such that $K<N$ and $\dim(G/N)=1$. 

\begin{claim}
$(H\cap N)K$ is meager in $N$.
\end{claim}
\begin{claimproof}
Otherwise, by Theorem \ref{thm-nm-max}, $(H\cap N)K$ is open in $N$. Note that $HK=H(H\cap N)K$ and the map $\chi: H\times N\rightarrow G:(h,n)\mapsto hn$ is an open map. Then $HK=\chi(H\times (H\cap N)K)$ is open in $G$, contradicting the assumption that $HK$ is meager in $G$.
\end{claimproof}

Since $\I|_N$ induces $\ind^{mn}$ on $N$, we have  $(H\cap N)K\in \I_N$. Taking $K_1=G>K_2=N>K_3=K$, Transitivity says that $(H\cap N)K\notin \I_N$ and $HN\notin \I_G$ iff $HK\notin \I_G$. Hence, we have $HK\in \I_G$.
\end{proof}

\begin{remark}
The conclusion cannot be extended to solvable Lie groups.  The group $\mathrm{SE}(2)$ provides a counterexample  (Example \ref{se2}). 
\end{remark}

\end{document}